\documentclass[12pt]{amsart}


\usepackage{geometry}
\geometry{verbose,tmargin=3cm,bmargin=3.3cm,lmargin=3cm,rmargin=3cm}
\usepackage{amsthm}
\usepackage{amstext}
\usepackage{amssymb, url}

\usepackage{graphicx}

\newcommand{\hookdownarrow}{\mathrel{\rotatebox[origin=c]{-90}{$\hookrightarrow$}}}

\makeatletter
\numberwithin{equation}{section}
\numberwithin{figure}{section}
\theoremstyle{plain}
\newtheorem{thm}{Theorem}[section]
  \theoremstyle{definition}
  \newtheorem{defn}[thm]{Definition}
  \theoremstyle{remark}
  \newtheorem{rem}[thm]{Remark}
  \theoremstyle{plain}
  \newtheorem{lem}[thm]{Lemma}
  \theoremstyle{plain}
  \newtheorem{prop}[thm]{Proposition}
  \theoremstyle{plain}
  \newtheorem{cor}[thm]{Corollary}
  
  \newtheorem{remark}[thm]{Remark}

\usepackage{latexsym}\usepackage[all]{xy}\usepackage{graphics}\usepackage{lscape}\usepackage{array}\setlength{\extrarowheight}{5pt}

\def\quot{/\!\!/}
\def\hom{\mathsf{Hom}}
\renewcommand{\leq}{\leqslant}

\renewcommand{\geq}{\geqslant}

\newcommand{\bG}{\mathbf{G}}
\newcommand{\X}{\mathfrak{X}}

\newcommand{\R}{\mathbb{R}}
\newcommand{\Z}{\mathbb{Z}}

\newcommand{\C}{\mathbb{C}}

\newcommand{\VV}{\mathbb{V}}

\newcommand{\N}{\mathbb{N}}

\newcommand{\SU}{\mathrm{SU}}
\newcommand{\U}{\mathrm{U}}
\newcommand{\GL}{\mathrm{GL}}

\newcommand{\SL}{\mathrm{SL}}
\newcommand{\SO}{\mathrm{SO}}
\newcommand{\Or}{\mathrm{O}}
\newcommand{\Sp}{\mathrm{Sp}}

\newcommand{\diag}{\mathrm{diag}}
\newcommand{\la}{\langle}
\newcommand{\ra}{\rangle}

\DeclareMathOperator{\Aut}{Aut}

\DeclareMathOperator{\tr}{tr}
\DeclareMathOperator{\Pf}{Pf}

\DeclareMathOperator{\End}{End}

\DeclareMathOperator{\Fix}{Fix}
\DeclareMathOperator{\Lie}{Lie}

\newcommand{\liep}{\mathfrak{p}}
\newcommand{\liepc}{\mathfrak{p}^{\mathbb{C}}}
\newcommand{\liek}{\mathfrak{k}}
\newcommand{\liekc}{\mathfrak{k}^{\mathbb{C}}}
\newcommand{\lieg}{\mathfrak{g}}
\newcommand{\liegc}{\mathfrak{g}^{\mathbb{C}}}


\title[Topology of Moduli of Free Group Representations]{Topology of Moduli Spaces of Free Group Representations in Real Reductive Groups}

\author[A. C. Casimiro]{A. C. Casimiro}

\address{Departamento Matem\'{a}tica, Faculdade de Ci\^{e}ncias e Tecnologia,
Universidade Nova de Lisboa}

\email{amc@fct.unl.pt}

\author[C. Florentino]{C. Florentino}

\address{Departamento Matem\'{a}tica, Instituto Superior T\'{e}cnico, Av. Rovisco
Pais, 1049-001 Lisbon, Portugal}
\email{cfloren@math.ist.utl.pt}

\author[S. Lawton]{S. Lawton}

\address{Department of Mathematics, The University of Texas-Pan American,
1201 West University Drive Edinburg, TX 78539, USA}

\email{lawtonsd@utpa.edu}

\author[A. Oliveira]{A. Oliveira}
  \address{Departamento de Matem\'atica \\
  Universidade de Tr\'{a}s-os-Montes e Alto Douro\\ UTAD \\
Quinta dos Prados \\ 5001-801 Vila Real \\ Portugal}
  \email{agoliv@utad.pt}

\thanks{This work was partially supported by the projects PTDC/MAT/099275/2008 and PTDC/MAT/120411/2010, FCT, Portugal.  The authors also acknowledge support from U.S. National Science Foundation grants DMS 1107452, 1107263, 1107367 ``RNMS: GEometric structures And Representation varieties" (the GEAR Network).  Additionally, the third author was partially supported by the Simons Foundation grant 245642 and the U.S. National Science Foundation grant DMS 1309376, and the fourth author was partially supported by Centro de Matem\'atica da Universidade de Tr\'as-os-Montes e Alto Douro (PEst-OE/MAT/UI4080/2011).}

\keywords{Character varieties, Real reductive groups, Representation varieties}

\makeatother

\begin{document}
\begin{abstract}
Let $G$ be a real reductive algebraic group with maximal compact
subgroup $K$, and let $F_{r}$ be a rank $r$ free group.
We show that the space of closed orbits in $\hom(F_{r},G)/G$
admits a strong deformation retraction to the orbit space $\hom(F_{r},K)/K$.
In particular, all such spaces have the same homotopy type. We compute
the Poincar\'{e} polynomials of these spaces for some low rank groups
$G$, such as $\Sp(4,\mathbb{R})$ and $\U(2,2)$. We also compare
these real moduli spaces to the real points of the corresponding complex
 moduli spaces, and describe the geometry of many examples.
\end{abstract}
\maketitle

\section{Introduction}

Let $G$ be a complex reductive algebraic group and $\Gamma$ be a
finitely generated group. Moduli spaces of representations of $\Gamma$
into $G$, the so-called $G$-character varieties of $\Gamma$, play
important roles in hyperbolic geometry, the theory of bundles and
connections, knot theory and quantum field theories. These are spaces
of the form $\mathfrak{X}_{\Gamma}(G):=\hom(\Gamma,G)\quot G$, where
the quotient is to be understood in the setting of (affine) geometric
invariant theory (GIT), for the conjugation action of $G$ on the
representation space $\hom(\Gamma,G)$.

Some particularly relevant cases include, for instance, the fundamental
group $\Gamma=\pi_{1}(X),$ of a compact Riemann surface $X$. In
this situation, character varieties can be identified, up to homeomorphism,
with certain moduli spaces of $G$-Higgs bundles over $X$ (\cite{H,S}).
Another important case is when $\Gamma=\pi_{1}(M\setminus L)$ where
$L$ is a knot (or link) in a 3-manifold $M$; here, character varieties
define important knot and link invariants, such as the A-polynomial
(\cite{CCGLS}).

In the case when $\Gamma$ is a free group $F_{r}$ of rank $r\geq1$,
the topology of $\mathfrak{X}_{r}(G):=\mathfrak{X}_{F_{r}}(G)$, in
this generality, was first investigated in \cite{Florentino-Lawton:2009}.
Note that we always have embeddings $\mathfrak{X}_{\Gamma}(G)\subset\mathfrak{X}_{r}(G)$,
since any finitely generated $\Gamma$ is a quotient of some free
group $F_{r}$.
With respect to natural Hausdorff topologies, the spaces $\mathfrak{X}_{r}(G)$
turn out to be homotopy equivalent to the quotient spaces $\mathfrak{X}_{r}(K):=\hom(F_{r},K)/K$,
where $K$ is a maximal compact subgroup of $G$. Moreover, there
is a canonical strong deformation retraction from $\mathfrak{X}_{r}(G)$
to $\mathfrak{X}_{r}(K)$. The proofs of these results use Kempf-Ness
theory, which relates, under certain conditions, the action of a compact
group $K$ on a complex algebraic variety, to the action of its complexification
$G=K^{\mathbb{C}}$.

In the present article, we extend these results to the more general
case when $G$ is a \emph{real reductive Lie group} (see Definition
\ref{def:condforG} below, for the precise conditions we consider).
Note that this situation includes both the compact case $G=K$, and
its complexification $G=K^{\mathbb{C}}$, since both are special cases
of real reductive groups, but also includes non-compact real groups
for which we cannot identify $G$ with the complexification of $K$.
As main examples, we have the split real forms of complex simple groups
such as $\SL(n,\mathbb{R})$, $\Sp(2n,\mathbb{R})$ and other classical
matrix groups.
For such groups $G$, the appropriate geometric structure on the analogous
GIT quotient, still denoted by $\mathfrak{X}_{r}(G):=\hom(F_{r},G)\quot G$
(and where $G$ again acts by conjugation) was considered by Richardson
and Slodowy in \cite{rich-slod:1990}. As in the complex case, this
quotient parametrizes closed orbits under $G$, but contrary to that
case, even when $G$ is algebraic, the quotient is in general only
a semi-algebraic set, in a certain real vector space.

One of our main results is that, with respect to the natural topologies
induced by natural embeddings in vector spaces, $\mathfrak{X}_{r}(G)$
is again homotopy equivalent to $\mathfrak{X}_{r}(K)$. As a Corollary,
we obtain a somewhat surprising result that the homotopy type of
$\mathfrak{X}_{r}(G)$ depends only on $r$ and on $K$, but not on
$G$. This is especially interesting when we have two distinct real
groups $G_{1}$ and $G_{2}$ sharing the same maximal compact $K$,
as it means that the $G_{1}$- and $G_{2}$-character varieties of
$F_{r}$ are equivalent, up to homotopy.

The second main result states that, when $G$ is algebraic, there
is also a strong deformation retraction from $\mathfrak{X}_{r}(G)$
to $\mathfrak{X}_{r}(K)$. The proofs of these statements use the Kempf-Ness theory
for real groups developed by Richardson and Slodowy in \cite{rich-slod:1990}.

It should be remarked, by way of contrast, that the homotopy equivalence
statement above does not hold for other finitely generated groups,
such as $\Gamma=\pi_{1}(X)$, for a Riemann surface $X$, even in
the cases $G=\SL(n,\mathbb{C})$ and $K=\SU(n)$ (\cite{BF}). On the
other hand, very recently it was shown by different techniques
that the deformation statements hold when $\Gamma$ is a finitely
generated Abelian group (\cite{Florentino-Lawton:2013b}), or a finitely generated nilpotent
group (\cite{B}).

Using these homotopy equivalences, we present new computations of Poincar\'{e} polynomials of some of
the character varieties considered, such as $\Sp(4,\mathbb{R})$ and $\U(2,2)$.

Lastly, when $G$ is a complex reductive algebraic group there are very explicit
descriptions of some of the spaces $\mathfrak{X}_{r}(G)$ in terms of natural
coordinates which we call {\em trace coordinates} (see Section \ref{sec:examples}).
Thus, for these examples, and taking the real points in these trace coordinates,
we obtain a concrete relation between
$\mathfrak{X}_{r}(G(\mathbb{R}))$ and $\mathfrak{X}_{r}(G)(\mathbb{R})$, which allows
the visualization of the deformation retraction.

The article can be outlined as follows. In Section 2, we
present the first definitions and properties of real reductive Lie groups
$G$, and of $G$-valued character varieties of free groups. In the third section, we use the polar/Cartan decomposition
to describe the deformation retraction of $\hom(F_{r},G)/K$ onto
$\mathfrak{X}_{r}(K)$.
The fourth section is devoted to describe the Kempf-Ness set for this context and to the proof of the main results: the
homotopy equivalence between $\mathfrak{X}_{r}(G)$ and $\mathfrak{X}_{r}(K)$,
and the canonical strong deformation retraction, in the case of algebraic
$G$.
In Section 5 we consider low rank orthogonal,
unitary and symplectic groups, and compute the Poincar\'{e}
polynomials of some $G$-character varieties, for non-compact $G$,
such as $\mathfrak{X}_{r}(\U(2,1))$, $\mathfrak{X}_{r}(\U(2,2))$,
$\mathfrak{X}_{r}(\Sp(4,\mathbb{R}))$ and $\mathfrak{X}_{r}(\SO(3,\mathbb{C}))$.
One crucial ingredient for these computations is the topology of $\mathfrak{X}_{r}(\U(2))$
which is based on T. Baird's determination of the Poincar\'{e} polynomial
of $\mathfrak{X}_{r}(\SU(2))$. Finally, Section 6 describes in detail the geometry of $\SL(2,\mathbb{R})$-character varieties in terms of natural invariant functions, such as trace coordinates, defined on the corresponding
$\SL(2,\mathbb{C})$-character variety.

\section{Real character varieties}

\label{sec:real-char}

\subsection{Setting}


Let us define the precise conditions on a real Lie group $G$, for which
our results will apply.
\begin{defn}
\label{def:condforG} Let $K$ be a compact
Lie group. We say that $G$ is a {\em real $K$-reductive Lie group} if the following conditions hold:
\begin{enumerate}
\item $K$ is a maximal compact subgroup of $G$;
\item $G$ is a subgroup, containing the identity component, of a linear
real algebraic group $\mathbf{G}(\R)$ defined as the $\R$-points
of a complex reductive algebraic group $\mathbf{G}$
defined over the field $\R$.
\item $G$ is Zariski dense in $\mathbf{G}$.
\end{enumerate}
\end{defn}

In other words, condition (ii) says that the Lie group $G$ is such that there exists a complex
reductive algebraic group $\mathbf{G}$, given by
the zeros of a set of polynomials with real coefficients, such that
\begin{equation}
\mathbf{G}(\R)_{0}\subseteq G\subseteq\mathbf{G}(\R),\label{eq1}\end{equation}
 where $\mathbf{G}(\R)$ denotes the real algebraic group of $\R$-points
of $\mathbf{G}$, and $\mathbf{G}(\R)_{0}$ its identity component.
We note that, if $G\neq\mathbf{G}(\R)$, then $G$ is not
necessarily an algebraic group (consider for example $G=\GL(n,\R)_{0}$).
When $K$ is understood, we often simply call $G$ a real reductive Lie group.

\begin{rem}\label{rem:1}
\begin{enumerate}
\item[]
\item Since $\mathbf{G}$ is a complex reductive algebraic group,
it is isomorphic to a closed subgroup of some $\GL(m,\C)$ (see
\cite[Theorem 2.3.7]{springer:2009}), so $\mathbf{G}(\R)$ is isomorphic
to a closed subgroup of some $\GL(n,\R)$ (ie, it is a linear algebraic group).
Hence, one can think of
both $\mathbf{G}$ and $G$ as Lie groups of matrices. Accordingly,
unless explicitly mentioned otherwise, we will consider on $\mathbf{G}$
and on $G$ the usual Euclidean topology which is induced from (and
is independent of) such an embedding.

\item Since $\mathbf{G}(\R)$ is a real algebraic group then, if it
is connected, $G=\mathbf{G}(\R)$ is algebraic and Zariski dense in
$\mathbf{G}$. So, condition (iii) in Definition \ref{def:condforG} holds automatically provided
that $\mathbf{G}(\R)$ is connected.

\item We point out that our working definition of real $K$-reductive group does not coincide with some definitions of a real reductive group encountered in the literature, such as, for instance,  \cite{knapp:2002}.
\end{enumerate}
\end{rem}

Denote by $\lieg$ the Lie algebra of $G$, and by $\Lie(\mathbf{G})$,
respectively $\Lie(\mathbf{G}(\R))$, the Lie algebras of $\mathbf{G}$,
and $\mathbf{G}(\R)$. It is clear that $\mathbf{G}(\R)$ is a
real form of $\mathbf{G}$, so that $\Lie(\mathbf{G})=\Lie(\mathbf{G}(\R))^{\C}$,
where $\Lie(\mathbf{G}(\R))^{\C}$ denotes the complexification of
$\Lie(\mathbf{G}(\R))$. It follows that $G$ is also a real form
of $\mathbf{G}$, since \eqref{eq1} implies that $\lieg=\Lie(\mathbf{G}(\R))$
and hence \[
\liegc=\Lie(\mathbf{G}).\]
 In view of this, we shall write $\liegc$ instead of $\Lie(\mathbf{G})$.

The given conditions on $G$ are not very restrictive. Indeed, all
classical real matrix groups are in this setting. On the other hand,
$G$ can also be any complex reductive Lie group, if we view it as
a real reductive Lie group in the usual way.

As an example which is
not under the conditions of Definition \ref{def:condforG}, we can
consider $\widetilde{\SL(n,\R)}$, the universal covering group of
$\SL(n,\R)$, which admits no faithful finite dimensional linear representation (and hence is not a matrix group).

\subsection{Character varieties}

Let $F_{r}$ be a rank $r$ free group and $\mathbf{G}$ be a complex reductive algebraic group
defined over $\mathbb{R}$.
The \emph{$\mathbf{G}$-representation
variety of $F_{r}$} is defined as \[
\mathfrak{R}_{r}(\mathbf{G}):=\hom(F_{r},\mathbf{G}).\]

There is a bijection between $\mathfrak{R}_{r}(\mathbf{G})$ and $\mathbf{G}^{r}$,
in fact this is a homeomorphism if $\mathfrak{R}_{r}(\mathbf{G})$
is endowed with the compact-open topology (as defined on a space of
maps, with $F_{r}$ given the discrete topology) and $\mathbf{G}^{r}$
with the product topology. As $\mathbf{G}$ is a smooth affine variety,
$\mathfrak{R}_{r}(\mathbf{G})$ is also a smooth affine variety and
it is defined over $\mathbb{R}$.

Consider now the action of $\mathbf{G}$ on $\mathfrak{R}_{r}(\mathbf{G})$
by conjugation. This
defines an action of $\mathbf{G}$ on the algebra $\mathbb{C}[\mathfrak{R}_{r}(\mathbf{G})]$
of regular functions on $\mathfrak{R}_{r}(\mathbf{G})$. Let $\mathbb{C}[\mathfrak{R}_{r}(\mathbf{G})]^{\mathbf{G}}$
denote the subalgebra of $\mathbf{G}$-invariant functions.
Since $\mathbf{G}$ is reductive the affine categorical
quotient may be defined as \[
\mathfrak{X}_{r}(\mathbf{G}):=\mathfrak{R}_{r}(\mathbf{G})/\!/\mathbf{G}=\mathrm{Spec}_{\max}(\mathbb{C}[\mathfrak{R}_{r}(\mathbf{G})]^{\mathbf{G}}).\]
 This is a singular affine variety (not necessarily irreducible), whose points correspond to unions
of $\mathbf{G}$-orbits in $\mathfrak{R}_{r}(\mathbf{G})$ whose Zariski
closures intersect. Since $\X_r(\bG)$ is an affine variety, it is a subset of an affine space, and inherits the Euclidean topology. With respect to this topology, in \cite{Florentino-Lawton:2013b}, it is shown that $\X_r(\bG)$ is homeomorphic to the conjugation orbit space of closed orbits (called the {\it polystable quotient}). $\mathfrak{X}_{r}(\mathbf{G})$, together with that topology, is called the \emph{$\mathbf{G}$-character variety}.

As above, let $K$ be a compact Lie group, and $G$ be a real $K$-reductive Lie group.
In like fashion, we define the \emph{$G$-representation variety of $F_{r}$}:
\[
\mathfrak{R}_{r}(G):=\hom(F_{r},G).
\]
Again, $\mathfrak{R}_{r}(G)$ is homeomorphic to $G^{r}$.
Similarly, as a set, we define
\[\mathfrak{X}_{r}(G):=\mathfrak{R}_{r}(G)/\!/G
\]
to be the set of closed orbits under the conjugation action of $G$
on $\mathfrak{R}_{r}(G)$. We give $\mathfrak{X}_{r}(G)$ the Hausdorff topology
induced by the quotient topology on $\mathfrak{R}_{r}(G)$. It is likewise
called the \emph{$G$-character variety of $F_r$} even though it may not even be a semi-algebraic set.
However, it is an affine real semi-algebraic set when G is real algebraic, and it is always Hausdorff
because we considered only closed $G$-orbits; see \cite{rich-slod:1990}.  This quotient coincides with the one considered by Richardson-Slodowy in \cite[Section 7]{rich-slod:1990}.

For $K$ a compact Lie group, with its usual topology, we also define the space \[
\mathfrak{X}_{r}(K):=\hom(F_{r},K)/K\cong K^{r}/K,\]
 called the \emph{$K$-character variety} of $F_{r}$, which is a compact and Hausdorff space
as the $K$-orbits are always closed. Moreover, it can be identified with a semi-algebraic subset of $\mathbb{R}^{d}$, for
some $d$.

Our aim in this paper is to compare the topologies of $\mathfrak{X}_{r}(G)$
and of $\mathfrak{X}_{r}(K)$, whenever $G$ is a real $K$-reductive Lie group, as in
Definition \ref{def:condforG}. In fact, we will show that $\mathfrak{X}_{r}(G)$
and $\mathfrak{X}_{r}(K)$ are generally homotopy equivalent, and when $G$
is further assumed to be algebraic, there is a natural strong deformation
retraction from $\mathfrak{X}_{r}(G)$ to $\mathfrak{X}_{r}(K)$.
The first step in that direction is the proof that there is a strong
deformation retraction of $\mathfrak{R}_{r}(G)/K$ onto $\mathfrak{X}_{r}(K)$.
This follows directly from the existence of a $K$-equivariant strong
deformation retraction of $G$ onto $K$, as will be explained in
the next section.

\section{Cartan decomposition and deformation to the maximal compact}

\label{sec:polar-dec}

We begin by recalling some facts on real Lie algebra theory.

\subsection{Cartan decomposition}\label{subsec:Cartan}

As before, let $\lieg$ denote the Lie algebra of $G$, and $\liegc$
the Lie algebra of $\mathbf{G}$. We will fix a Cartan involution
$\theta:\liegc\to\liegc$ which restricts to a Cartan involution \begin{equation}
\theta:\lieg\to\lieg,\label{eq:Cartan inv}\end{equation}
still denoted in the same way. Recall (see, for instance, \cite[Theorem 6.16]{knapp:2002}
and also \cite[Theorem 7.1]{helgason:2001}), that such $\theta$
is defined as $\theta:=\sigma\tau$, where $\sigma,\tau$ are involutions
of $\liegc$ that commute, and such that $\lieg=\Fix(\sigma)$ and
$\liek':=\Fix(\tau)$ is the compact real form of $\liegc$ (so that $\liek'$ is the Lie algebra of a maximal compact subgroup of $\mathbf{G}$).  See Remark \ref{concreteremark} for concrete descriptions of these involutions in the setting of our main theorems.

Our choice of $\theta$ yields a Cartan decomposition of $\lieg$
\begin{equation}
\lieg=\liek\oplus\liep\label{eq:Cartan decomp of g}\end{equation}
 where \[
\liek=\lieg\cap\liek',\hspace{0.5cm}\liep=\lieg\cap i\liek'\]
 and $\theta|_{\liek}=1$ and $\theta|_{\liep}=-1$. Furthermore,
$\liek$ is precisely the Lie algebra of a maximal compact subgroup
$K$ of $G$. Notice that $K=K'\cap G$, where $K'$ is a maximal compact
subgroup of $\mathbf{G}$, with Lie algebra $\liek'=\liek\oplus i\liep$.
Moreover, $\liek$ and $\liep$ are such that $[\liek,\liep]\subset\liep$ and $[\liep,\liep]\subset\liek$.
Of course, we also have a Cartan decomposition of $\liegc$: \begin{equation}
\liegc=\liekc\oplus\liepc\label{eq:Cartan decomp of gc}\end{equation}
 with $\theta|_{\liekc}=1$ and $\theta|_{\liepc}=-1$.

Recall also that the Cartan involution \eqref{eq:Cartan inv} lifts
to a Lie group involution \[
\Theta:G\to G\]
 whose differential is $\theta$ and such that $K=\Fix(\Theta)=\{g\in G:\, \Theta(g)=g\}$.

\subsection{A deformation retraction from $G$ onto $K$}

\label{polar}

The multiplication map \[
m:K\times\exp(\liep)\to G,\]
 provides a diffeomorphism $G\simeq K\times\exp(\liep)$ (see \cite[Theorem 2.2]{rich-slod:1990} and the references therein). In particular, the exponential is injective on $\liep$. The inverse $m^{-1}:G\to K\times\exp(\liep)$ is defined as \[
m^{-1}(g)=(g(\Theta(g)^{-1}g)^{-1/2},(\Theta(g)^{-1}g)^{1/2}).\]
Here, we notice that if $g\in \exp(\liep)$ then $\Theta(g)=g^{-1}$. If we write $g=k\exp(X)$, for some $k\in K$ and $X\in\liep$, then
\begin{align*}
\Theta(g)^{-1}g&=\Theta(k\exp(X))^{-1}k\exp(X)=(k\exp(-X))^{-1}k\exp(X)=\\&=\exp(-X)^{-1}k^{-1}k\exp(X)
=\exp(2X).\end{align*}
So define \[
(\Theta(g)^{-1}g)^{t}:=\exp\left(2tX\right),\]
for any real parameter $t$. From this, one concludes that the topology
of $G$ is determined by $K$. It is a known fact that there is a
$K$-equivariant strong deformation retraction from $G$ to $K$.
For completeness, we provide a proof.

Consider, for each $t\in[0,1]$, the continuous map $f_{t}:G\to G$
defined by \[
f_{t}(g)=g(\Theta(g)^{-1}g)^{-t/2}.\]
More precisely, if $g=k\exp(X)$, for some $k\in K$ and $X\in\liep$,
then $f_{t}(g)=k\exp((1-t)X).$
\begin{lem}
\label{lemma:exponents} Let $a\in\R$, $h\in K$ and $g\in G$. Then
$(h\Theta(g)^{-1}gh^{-1})^{a}=h(\Theta(g)^{-1}g)^{a}h^{-1}$.\end{lem}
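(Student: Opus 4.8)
The plan is to reduce the claimed identity to the purely algebraic fact that for $h \in K$ and any element $y \in G$, conjugation commutes with taking the relevant real powers, i.e. $(hyh^{-1})^a = h\,y^a\,h^{-1}$, applied to $y = \Theta(g)^{-1}g$. The key observation is that $y = \Theta(g)^{-1}g$ lies in $\exp(\liep)$: writing $g = k\exp(X)$ with $k \in K$ and $X \in \liep$, the computation already displayed in the excerpt shows $\Theta(g)^{-1}g = \exp(2X)$. So the power $y^a = (\Theta(g)^{-1}g)^a$ is, by the definition given just above the lemma, simply $\exp(2aX)$.

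First I would establish that conjugation by $h \in K$ preserves $\liep$. This follows from the Cartan decomposition structure: $\Theta$ is a group automorphism fixing $K$ pointwise, its differential is $\theta$, and $\theta|_{\liep} = -1$; since $h \in K = \Fix(\Theta)$, the adjoint action $\Ad(h)$ commutes with $\theta$ and hence preserves the $(-1)$-eigenspace $\liep$. Concretely, $\Ad(h)X \in \liep$ for all $X \in \liep$. Then I would invoke the standard equivariance of the exponential map, $h\exp(Z)h^{-1} = \exp(\Ad(h)Z)$, valid for any $Z \in \lieg$ and any $h \in G$.

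With these two ingredients the proof is a short direct calculation. Using $\Theta(g)^{-1}g = \exp(2X)$ and the definition $(\Theta(g)^{-1}g)^a = \exp(2aX)$, I compute
\begin{align*}
\bigl(h\,\Theta(g)^{-1}g\,h^{-1}\bigr)^a
&= \bigl(h\exp(2X)h^{-1}\bigr)^a
= \exp\bigl(\Ad(h)(2X)\bigr)^a \\
&= \exp\bigl(2a\,\Ad(h)X\bigr)
= h\exp(2aX)h^{-1}
= h\,(\Theta(g)^{-1}g)^a\,h^{-1},
\end{align*}
where the third equality uses that $\Ad(h)X \in \liep$ so that $\exp(\Ad(h)X)$ again lies in $\exp(\liep)$ and the real-power convention applies consistently, and the middle steps use exponential equivariance in both directions. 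This closes the argument.

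The only genuine subtlety, and the step I would be most careful about, is the well-definedness and consistency of the real power $(\,\cdot\,)^a$ under conjugation: the notation $(\Theta(g)^{-1}g)^a := \exp(2aX)$ depends a priori on the decomposition $g = k\exp(X)$, so I must confirm that conjugating by $h$ corresponds to replacing $X$ by $\Ad(h)X$ in a way compatible with both the factorization of $h g h^{-1}$ and the power convention. Since $h \in K$ and $\Ad(h)X \in \liep$, the element $hgh^{-1} = (hkh^{-1})\exp(\Ad(h)X)$ has $hkh^{-1} \in K$ and $\Ad(h)X \in \liep$, so its polar factorization is exactly the conjugated one and the power is unambiguous. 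This relies essentially on the injectivity of $\exp$ on $\liep$ noted earlier in the excerpt, which guarantees the $\liep$-component is uniquely determined.
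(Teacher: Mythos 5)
Your proof is correct and follows essentially the same route as the paper's: write $\Theta(g)^{-1}g$ as $\exp$ of an element of $\liep$, invoke equivariance of the exponential under conjugation and $\Ad$, and apply the real-power convention. The one thing you add beyond the paper's argument is the explicit check that $\Ad(h)$ preserves $\liep$ for $h\in K$ (so the power convention applies consistently to the conjugated element); the paper uses this implicitly without comment, so your extra care is a welcome but inessential refinement.
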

\begin{proof}
When $a$ is an integer, this is obvious. Suppose that $a\in\R$.
Then, as noted above, $\Theta(g)^{-1}g=\exp(X)$ for some $X\in\liep$
and since the exponential is equivariant with respect to conjugation
and to the adjoint representation, we have \begin{align*}
(h\Theta(g)^{-1}gh^{-1})^{a}=\exp(hXh^{-1})^{a}=\exp(ahXh^{-1})=h\exp(aX)h^{-1}=\\
=h(\Theta(g)^{-1}g)^{a}h^{-1}.\end{align*}
\end{proof}
\begin{prop}
\label{sdr}The map $H:[0,1]\times G\to G$, $H(t,g)=f_{t}(g)$ is
a strong deformation retraction from $G$ to $K$, and for each $t$,
$H(t,-)=f_{t}$ is $K$-equivariant with respect to the action of
conjugation of $K$ in $G$. \end{prop}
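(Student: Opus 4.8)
The plan is to verify the three defining properties of a strong deformation retraction directly from the explicit formula, using the polar-coordinate description $f_{t}(g)=k\exp((1-t)X)$ whenever $g=k\exp(X)$ with $k\in K$ and $X\in\liep$. First I would establish that $H$ is well-defined and continuous. The key point here is that, by the diffeomorphism $G\simeq K\times\exp(\liep)$ from Section~\ref{polar}, every $g\in G$ has a \emph{unique} decomposition $g=k\exp(X)$, so the formula $f_{t}(g)=k\exp((1-t)X)$ is unambiguous; continuity then follows because $m^{-1}$ is continuous (indeed smooth) and the scaling $X\mapsto(1-t)X$ and the exponential are continuous, so $H(t,g)=m\big(k,\exp((1-t)X)\big)$ is a composition of continuous maps in $(t,g)$.

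Next I would check the three retraction conditions. At $t=0$ we have $f_{0}(g)=k\exp(X)=g$, so $H(0,-)=\Id_{G}$. At $t=1$ we have $f_{1}(g)=k\exp(0)=k\in K$, so $H(1,-)$ maps $G$ into $K$; this shows $H$ is a deformation retraction onto $K$. For the \emph{strong} condition, I must verify that $K$ stays fixed throughout the homotopy: if $g=k\in K$, then its polar decomposition has $X=0$ (since $\exp$ is injective on $\liep$ and $\exp(0)=k^{-1}k$ gives the trivial $\liep$-part), hence $f_{t}(k)=k\exp((1-t)\cdot 0)=k$ for all $t\in[0,1]$. Thus $H(t,-)$ restricts to the identity on $K$ for every $t$.

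Finally I would prove $K$-equivariance of each $f_{t}$, and this is where Lemma~\ref{lemma:exponents} does the real work. For $h\in K$ and $g\in G$, I would compute, using the definition $f_{t}(g)=g(\Theta(g)^{-1}g)^{-t/2}$, the invariance $\Theta(hgh^{-1})=h\Theta(g)h^{-1}$ (which holds because $\Theta$ fixes $K$ pointwise and is a group homomorphism), and then apply the lemma with $a=-t/2$:
\begin{align*}
f_{t}(hgh^{-1})&=hgh^{-1}\big(\Theta(hgh^{-1})^{-1}hgh^{-1}\big)^{-t/2}
=hgh^{-1}\big(h\Theta(g)^{-1}gh^{-1}\big)^{-t/2}\\
&=hgh^{-1}\,h(\Theta(g)^{-1}g)^{-t/2}h^{-1}
=hg(\Theta(g)^{-1}g)^{-t/2}h^{-1}=hf_{t}(g)h^{-1}.
\end{align*}

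I expect the main obstacle to be the continuity and well-definedness at the outset, not the equivariance: the formula $f_{t}(g)=g(\Theta(g)^{-1}g)^{-t/2}$ involves a real power of a group element, and one must be careful that $(\Theta(g)^{-1}g)^{-t/2}$ is interpreted via the canonical branch $\exp(-tX)$ coming from the unique $X\in\liep$ with $\Theta(g)^{-1}g=\exp(2X)$, as set up in Section~\ref{polar}. Once this is pinned down by the polar decomposition, continuity is essentially the continuity of $m^{-1}$ together with smoothness of $\exp$, and the remaining verifications reduce to the routine algebra above.
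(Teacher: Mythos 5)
Your proposal is correct and follows essentially the same route as the paper: the three retraction properties are read off from the polar-coordinate formula $f_{t}(k\exp(X))=k\exp((1-t)X)$, and $K$-equivariance is obtained from Lemma \ref{lemma:exponents} via exactly the same chain of equalities (the paper leaves the identity $\Theta(hgh^{-1})=h\Theta(g)h^{-1}$ and the continuity of $H$ implicit, both of which you justify explicitly, but these are minor elaborations rather than a different argument).
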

\begin{proof}
Clearly $H|_{\{0\}\times G}=1_{G}$ and $H(\{1\}\times G)\subset K$.
Moreover, it is also clear that $H|_{\{t\}\times K}=1_{K}$ for all
$t$. This shows that $H$ is a strong deformation retraction from
$G$ to $K$. To prove that $f_{t}$ is $K$-equivariant, we see,
using Lemma \ref{lemma:exponents}, that for any $h\in K$, \[
\begin{split}f_{t}(hgh^{-1}) & =hgh^{-1}(\Theta(hgh^{-1})^{-1}hgh^{-1})^{-t/2}\\
 & =hgh^{-1}(h\Theta(g)^{-1}gh^{-1})^{-t/2}\\
 & =hg(\Theta(g)^{-1}g)^{-t/2}h^{-1}=hf_{t}(g)h^{-1}.\end{split}
\]

\end{proof}

By Proposition \ref{sdr}, there is a $K$-equivariant strong deformation
retraction from $G$ to $K$, so there is a $K$-equivariant strong
deformation retraction from $G^{r}$ onto $K^{r}$ with respect to
the diagonal action of $K$. This immediately implies:
\begin{cor}
\label{sdr1} Let $K$ be a compact Lie group and $G$ be a real $K$-reductive
Lie group. Then $\mathfrak{X}_{r}(K)$ is a strong deformation retract
of $\mathfrak{R}_{r}(G)/K$.
\end{cor}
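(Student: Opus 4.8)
The plan is to deduce Corollary \ref{sdr1} from Proposition \ref{sdr} by passing to the $r$-fold product and then descending to the quotient by $K$. First I would observe that the $K$-equivariant strong deformation retraction $H:[0,1]\times G\to G$ of Proposition \ref{sdr} induces, by acting diagonally in each of the $r$ factors, a map
\[
H^{(r)}:[0,1]\times G^{r}\to G^{r},\qquad H^{(r)}(t,(g_{1},\dots,g_{r}))=(f_{t}(g_{1}),\dots,f_{t}(g_{r})).
\]
Since each $f_{t}$ satisfies $f_{0}=1_{G}$, $f_{1}(G)\subset K$, and $f_{t}|_{K}=1_{K}$, the product map $H^{(r)}$ is likewise a strong deformation retraction of $G^{r}=\mathfrak{R}_{r}(G)$ onto $K^{r}$. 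Crucially, because each $f_{t}$ is $K$-equivariant for conjugation, $H^{(r)}(t,-)$ is equivariant with respect to the \emph{diagonal} conjugation action of $K$ on $G^{r}$: for $h\in K$ one has $f_{t}(hg_{i}h^{-1})=hf_{t}(g_{i})h^{-1}$ in every coordinate.

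Next I would pass to the orbit spaces. Let $\pi:G^{r}\to G^{r}/K=\mathfrak{R}_{r}(G)/K$ and $\pi_{K}:K^{r}\to K^{r}/K=\mathfrak{X}_{r}(K)$ denote the quotient maps by the diagonal conjugation action. Because $H^{(r)}(t,-)$ commutes with the $K$-action, it carries $K$-orbits to $K$-orbits, so it descends to a well-defined map $\overline{H}:[0,1]\times(G^{r}/K)\to G^{r}/K$ characterized by $\overline{H}(t,\pi(\mathbf{g}))=\pi(H^{(r)}(t,\mathbf{g}))$. Continuity of $\overline{H}$ follows from the universal property of the quotient topology: the composite $[0,1]\times G^{r}\to G^{r}\to G^{r}/K$ is continuous and constant on $K$-orbits in the second variable, and since $[0,1]$ is locally compact the product projection $\mathrm{id}\times\pi$ is itself a quotient map, so $\overline{H}$ is continuous. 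Finally, I would verify the three defining properties of a strong deformation retraction directly from those of $H^{(r)}$: at $t=0$, $\overline{H}(0,-)$ is the identity on $G^{r}/K$; at $t=1$, $\overline{H}(1,-)$ maps into $\pi(K^{r})=\mathfrak{X}_{r}(K)\subset G^{r}/K$; and for points of $\mathfrak{X}_{r}(K)$, since $f_{t}|_{K}=1_{K}$ we get $\overline{H}(t,-)|_{\mathfrak{X}_{r}(K)}=\mathrm{id}$ for all $t$, giving the ``strong'' property.

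The only subtle point, and the one I expect to require the most care, is the continuity of the descended homotopy $\overline{H}$ and the identification of $\mathfrak{X}_{r}(K)$ as a subspace of $\mathfrak{R}_{r}(G)/K$. The descent itself is formal once one knows $\mathrm{id}_{[0,1]}\times\pi$ is a quotient map, which holds because $[0,1]$ is compact (hence the projection is a proper, and in particular closed, quotient map); one must state this rather than take it for granted, since quotient maps are not stable under products in general. Beyond that, one should note that $\mathfrak{X}_{r}(K)=K^{r}/K$ embeds in $G^{r}/K$ as the image of the closed, $K$-invariant subspace $K^{r}\subset G^{r}$, so the subspace topology it inherits agrees with its intrinsic quotient topology; this is what makes ``strong deformation retract'' meaningful in the statement. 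With these topological points in hand, the result is immediate.
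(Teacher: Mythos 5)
Your proposal is correct and follows essentially the same route as the paper, which simply takes the $r$-fold product of the $K$-equivariant retraction from Proposition \ref{sdr} and descends to the quotient by the diagonal conjugation action. The paper states this as immediate without spelling out the quotient-map and continuity details you supply; your added care (that $\mathrm{id}_{[0,1]}\times\pi$ is a quotient map, and that $K^r/K$ carries the subspace topology from $G^r/K$) is a legitimate elaboration of the same argument rather than a different approach.
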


\section{The Kempf-Ness set and the deformation retraction}

\label{sec:def-retract}As before, fix a compact Lie group $K$, and
a real $K$-reductive Lie group $G$. Suppose that $G$ acts linearly
on a complex vector space $\mathbb{V}$, equipped with a Hermitian
inner product $\la\,,\ra$. Without loss of generality we can
assume that $\la\,,\ra$ is $K$-invariant, by averaging.
\begin{defn}
A vector $X\in\VV$ is a \emph{minimal vector} for the action of $G$
in $\VV$ if \[
\|X\|\leq\|g\cdot X\|,\]
for every $g\in G$, where $||\cdot||$ is the norm corresponding
to $\la\,,\ra$. Let $\mathcal{KN}_{G}=\mathcal{KN}(G,\VV)$
denote the set of minimal vectors. $\mathcal{KN}_{G}$ is known as
the \emph{Kempf-Ness set} in $\VV$ with respect to the action of
$G$. Note that $\mathcal{KN}_{G}$ depends on the choice of $\la\,,\ra$.
\end{defn}
For each $X\in\VV$, define the smooth real valued function $F_{X}:G\to\R$
by \[
F_{X}(g)=\frac{1}{2}\|g\cdot X\|^{2}.\]
The following characterization of minimal vectors is given in \cite[Theorem 4.3]{rich-slod:1990}.
\begin{thm}
\label{thm:belong-to-KN}Let $X\in\VV$. The following conditions
are equivalent:

(1) $X\in\mathcal{KN}_{G}$;

(2) $F_{X}$ has a critical point at $1_{G}\in G$;

(3) $\la A\cdot X,X\ra=0$, for every $A\in\liep$.
\end{thm}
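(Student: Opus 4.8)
The plan is to prove the equivalence of the three conditions characterizing minimal vectors by relating the geometry of the function $F_X$ on the group $G$ to its critical behavior, using the Cartan decomposition developed in Section \ref{subsec:Cartan}. The implication $(1)\Rightarrow(2)$ is almost immediate: if $X$ is a minimal vector, then $\|g\cdot X\|^2$ attains its minimum over all of $G$ at $g=1_G$, hence in particular $F_X$ has a critical point there. The content of the theorem lies in the reverse directions and in connecting criticality to the algebraic condition (3).

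First I would compute the differential of $F_X$ at $1_G$. For a curve $t\mapsto \exp(tA)$ with $A\in\lieg$, one has
\[
\frac{d}{dt}\Big|_{t=0}F_X(\exp(tA))=\frac{d}{dt}\Big|_{t=0}\tfrac{1}{2}\|\exp(tA)\cdot X\|^2=\operatorname{Re}\la A\cdot X,X\ra,
\]
where I use that the $G$-action is linear and $\la\,,\ra$ is Hermitian. Decomposing $A=A_\liek+A_\liep$ via the Cartan decomposition \eqref{eq:Cartan decomp of g}, the key observation is that the $K$-invariance of $\la\,,\ra$ forces the $\liek$-directions to contribute nothing: for $B\in\liek$, the one-parameter subgroup $\exp(tB)$ lies in $K$ and acts by isometries, so $F_X$ is constant along it and $\operatorname{Re}\la B\cdot X,X\ra=0$ automatically. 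Thus $dF_X|_{1_G}=0$ if and only if $\operatorname{Re}\la A\cdot X,X\ra=0$ for all $A\in\liep$. To upgrade $\operatorname{Re}\la A\cdot X,X\ra$ to $\la A\cdot X,X\ra$ in condition (3), I would note that for $A\in\liep$ the operator $A$ acts as a Hermitian (self-adjoint) operator on $\VV$ with respect to the $K$-invariant inner product — this is the infinitesimal manifestation of $\Theta$ acting as $A\mapsto -A$ on $\liep$ together with the compatibility of $\la\,,\ra$ — so that $\la A\cdot X,X\ra$ is already real. This yields the equivalence $(2)\Leftrightarrow(3)$.

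It remains to prove $(3)\Rightarrow(1)$, which is where the main obstacle lies, since criticality is a priori only a local condition while being a minimal vector is global. The strategy is to exploit the polar decomposition $g=k\exp(Y)$ with $k\in K$, $Y\in\liep$, from Section \ref{polar}: since $K$ acts by isometries, $\|g\cdot X\|=\|\exp(Y)\cdot X\|$, so it suffices to show that $t\mapsto\|\exp(tY)\cdot X\|^2$ is minimized at $t=0$ for every fixed $Y\in\liep$. Writing $\phi(t)=F_X(\exp(tY))$, I would show this function is convex in $t$: differentiating twice and using that $Y$ acts self-adjointly gives $\phi''(t)=\la Y^2\cdot\exp(tY)X,\exp(tY)X\ra + \|Y\cdot\exp(tY)X\|^2\geq 0$ (the precise form needs the self-adjointness so that the cross terms assemble into a manifest sum of squares $\|Y\cdot\exp(tY)\cdot X\|^2\geq0$). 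Given convexity, a critical point at $t=0$ — which is exactly what condition (3) supplies, since $\phi'(0)=\la Y\cdot X,X\ra=0$ — forces $t=0$ to be a global minimum. Running this over all $Y\in\liep$ establishes $\|X\|\leq\|g\cdot X\|$ for all $g\in G$, completing the cycle of implications.

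The hard part will be establishing the convexity in the last step with full rigor, since one must be careful that the quantity $\la Y\cdot X, X\ra$ really is real (so that $\phi'(0)=0$ follows cleanly from condition (3)) and that the second-derivative computation genuinely produces a nonnegative expression rather than an indefinite one; both hinge on the self-adjointness of the $\liep$-action, which in turn rests on the $K$-invariance of the Hermitian form and the Cartan-involution structure. I would isolate this self-adjointness as the essential lemma and reduce everything else to it. Since the authors cite \cite[Theorem 4.3]{rich-slod:1990} for this statement, I expect their proof to follow essentially this convexity argument, possibly phrased in terms of the gradient flow of $F_X$ or via the Kempf-Ness function directly.
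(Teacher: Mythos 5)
The paper does not actually prove this statement: it is quoted verbatim from Richardson--Slodowy \cite[Theorem 4.3]{rich-slod:1990}, so there is no in-paper argument to compare against. Your reconstruction is the standard (and correct) Kempf--Ness convexity argument, and it is essentially the one Richardson and Slodowy give: reduce to $\exp(\liep)$ via the polar decomposition and $K$-invariance of the norm, then show $t\mapsto F_X(\exp(tY))$ is convex because $\liep$ acts by self-adjoint operators, so that the critical-point condition (3) at $t=0$ forces a global minimum. Your second-derivative computation is right (after self-adjointness it collapses to $2\|Y\cdot\exp(tY)X\|^2\geq 0$), and the equivalence of (2) and (3) via vanishing of the $\liek$-directions is also correct. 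The one point to state more carefully is the self-adjointness itself: averaging the Hermitian form over $K$ makes $\liek$ act skew-adjointly, but it does \emph{not} by itself force $\liep$ to act self-adjointly --- that is an additional compatibility hypothesis in the Richardson--Slodowy framework (equivalently, $\rho(\Theta(g))=(\rho(g)^{*})^{-1}$). In the situation of this paper it holds because of the concrete embedding of Remark \ref{concreteremark} and diagram \eqref{eq:inclusions}: the action is conjugation on $\mathfrak{gl}(n,\C)^{r}$ with $\la A,B\ra=\tr(A^{*}B)$, $\liep$ consists of real symmetric matrices, and $(\ad_A)^{*}=\ad_{A^{*}}=\ad_A$. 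If you isolate and verify that compatibility as your key lemma, as you propose, the argument is complete.
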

Since the action is linear and condition (3) above is polynomial,
we see that $\mathcal{KN}_{G}$ is a closed algebraic set in $\mathbb{V}$.
Kempf-Ness theory also works for closed $G$-subspaces. Indeed, let
$Y$ be an arbitrary closed $G$-invariant subspace of $\VV$, and 
define \[
\mathcal{KN}_{G}^{Y}:=\mathcal{KN}_{G}\cap Y.\]
Consider the map \[
\eta\,:\mathcal{KN}_{G}^{Y}/K\rightarrow Y\quot G,\]
obtained from the $K$-equivariant inclusion $\mathcal{KN}_{G}^{Y}\hookrightarrow Y$ and the natural map $Y/K\to Y\quot G$.

The next theorem is proved in \cite[Proposition 7.4, Theorems 7.6, 7.7 and 9.1]{rich-slod:1990}.
\begin{thm}
\label{thm:RS}The map $\eta:\mathcal{KN}_{G}^{Y}/K\to Y\quot G$
is a homeomorphism. In particular, if $Y$ is a real algebraic subset
of $\VV$, then $Y\quot G$ is homeomorphic to a closed semi-algebraic
set in some $\mathbb{R}^{d}$. Moreover, there is a $K$-equivariant
deformation retraction of $Y$ onto $\mathcal{KN}_{G}^{Y}$.
\end{thm}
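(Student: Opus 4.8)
The plan is to prove the three assertions together, using as the single analytic input the convexity of the norm-square function along the geodesics of the symmetric space attached to $G/K$. Following the Richardson--Slodowy setup I work with a Hermitian product $\la\,,\ra$ adapted to the Cartan decomposition \eqref{eq:Cartan decomp of g}, so that $\liek$ acts by skew-Hermitian and $\liep$ by Hermitian operators on $\VV$ (the $K$-invariance obtained by averaging is the part of this needed for $F_X$ to descend). The key observation is that for each $X\in\VV$ and each $A\in\liep$ the function $t\mapsto F_X(\exp(tA))=\tfrac12\|\exp(tA)\cdot X\|^2$ is convex: choosing an orthonormal eigenbasis $(e_j)$ of the self-adjoint operator $A$ with eigenvalues $\lambda_j$ and writing $X=\sum_jX_je_j$ gives $\|\exp(tA)\cdot X\|^2=\sum_je^{2t\lambda_j}|X_j|^2$, a nonnegative combination of convex exponentials. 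Since $F_X$ is left $K$-invariant and the curves $\exp(tA)$, $A\in\liep$, sweep out the geodesics through the base point, $F_X$ is geodesically convex on the nonpositively curved space $G/K$.

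First I would establish that $\eta$ is a continuous bijection. Continuity is immediate from its construction out of the inclusion $\mathcal{KN}_G^Y\hookrightarrow Y$ and the quotient map. For surjectivity, every point of $Y\quot G$ is a closed orbit $G\cdot X$; closedness forces the infimum of $\|g\cdot X\|$ to be attained at some $g_0$, and then $g_0\cdot X\in\mathcal{KN}_G^Y$ by Theorem \ref{thm:belong-to-KN}. For injectivity, suppose $X,X'\in\mathcal{KN}_G^Y$ satisfy $X'=g\cdot X$ with $g=k\exp(A)$, $k\in K$, $A\in\liep$; since minimality is $K$-invariant, both $X$ and $\exp(A)\cdot X$ are minimal, so the convex function $t\mapsto F_X(\exp(tA))$ attains its minimum at $t=0$ and at $t=1$ and is therefore constant on $[0,1]$. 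The eigenvalue computation above then forces $\lambda_j=0$ on the support of $X$, i.e. $A\cdot X=0$, whence $X'=k\cdot X\in K\cdot X$ and the two vectors define the same point of $\mathcal{KN}_G^Y/K$.

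To upgrade $\eta$ to a homeomorphism and to obtain the last assertion simultaneously, I would study the negative gradient flow of $\tfrac12\|\cdot\|^2$ in the $\liep$-directions, namely the flow of the $K$-equivariant vector field $X\mapsto-\mu(X)\cdot X$, where $\mu\colon\VV\to\liep$ is the moment map determined by $\la\mu(X),A\ra=\la A\cdot X,X\ra$ for all $A\in\liep$, so that $\mathcal{KN}_G=\mu^{-1}(0)$ by Theorem \ref{thm:belong-to-KN}. This flow preserves the $G$-invariant algebraic set $Y$, decreases the norm, and by convexity drives it to the infimum over the orbit closure; invoking the {\L}ojasiewicz gradient inequality, available because $\mathcal{KN}_G^Y$ is real algebraic, each trajectory converges as $s\to\infty$ to a single point of $\mathcal{KN}_G^Y$. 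Reparametrizing $s\in[0,\infty)$ to $t\in[0,1)$ and sending $t=1$ to this limit yields the desired $K$-equivariant strong deformation retraction $r\colon Y\to\mathcal{KN}_G^Y$; since trajectories issuing from a common $G$-orbit limit to $K$-conjugate minimal vectors, $r$ descends to a continuous inverse of $\eta$. For the semi-algebraic statement, condition (3) of Theorem \ref{thm:belong-to-KN} is polynomial, so $\mathcal{KN}_G^Y=\mathcal{KN}_G\cap Y$ is real algebraic when $Y$ is; the quotient of a real algebraic set by the compact group $K$ is, via a finite generating set of $K$-invariants together with the Tarski--Seidenberg theorem, homeomorphic to a closed semi-algebraic subset of some $\R^d$, and $\eta$ transports this to $Y\quot G$.

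I expect the main obstacle to be the convergence and continuity of this gradient flow: proving that every trajectory has a single limit (not merely a limit set), that the limit depends continuously on the initial point $X$ --- including at points of $\mathcal{KN}_G^Y$, where the flow is stationary, and as $t\to1$ --- and correctly handling non-closed orbits, whose trajectories escape into a strictly smaller closed orbit in the closure. It is precisely here that the nonpositive curvature together with the real-analytic ({\L}ojasiewicz) structure are indispensable; by contrast, once convexity is in hand the bijectivity of $\eta$ and the semi-algebraicity are comparatively routine.
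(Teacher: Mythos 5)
The paper does not actually prove this theorem: it is quoted verbatim from Richardson--Slodowy \cite{rich-slod:1990} (Proposition 7.4, Theorems 7.6, 7.7 and 9.1), so there is no internal proof to compare against. What you have written is, in effect, a reconstruction of the cited argument, and it is faithful to it: the convexity of $t\mapsto F_X(\exp(tA))$ for $A\in\liep$ acting by Hermitian operators is exactly the Kempf--Ness mechanism Richardson--Slodowy adapt to the real case, your bijectivity argument (attainment of the infimum on a closed orbit for surjectivity; constancy of a convex function minimized at two points, hence $A\cdot X=0$, for injectivity) is their Theorem 4.3/4.4 argument, and the moment-map gradient flow with the {\L}ojasiewicz inequality is precisely how their Section 9 (following Neeman and Schwarz) produces the $K$-equivariant retraction. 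Your identification of the correct compatibility condition on the inner product (that $\liep$ act by self-adjoint operators, not merely that the product be $K$-invariant) is a point the paper itself glosses over, and is satisfied for the concrete trace form used later.

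Two substantive points are missing or deferred. First, for $\eta$ to be well defined as a map into $Y\quot G$ (the set of \emph{closed} orbits) you need the converse direction: if $X$ is a minimal vector then $G\cdot X$ is closed. This follows from the same convexity (a non-closed orbit through a minimal vector would contain points of strictly smaller norm in its closure, contradicting minimality along a suitable one-parameter subgroup), but it is not free, and you also invoke it implicitly when you assert that trajectories from a common $G$-orbit limit to $K$-conjugate minimal vectors --- that step additionally uses uniqueness of the closed orbit in an orbit closure. Second, and more seriously, the entire third assertion and the continuity of $\eta^{-1}$ rest on the convergence of each gradient trajectory to a single limit and the joint continuity of the limit map, including at stationary points and for non-closed orbits. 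You flag this honestly, but as written it is asserted rather than proved; it is the genuinely hard analytic content of the theorem (Neeman's theorem), and an alternative route to the homeomorphism that avoids it --- proving directly that $\eta$ is proper and closed, as Richardson--Slodowy do for Theorem 7.6 --- would make the first two assertions independent of the flow. So the proposal is a correct and well-organized blueprint of the right proof, but it does not yet close the argument for the deformation retraction.
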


\subsection{Kempf-Ness set for character varieties}

To apply the Kempf-Ness theorem to our situation, we need to embed
the $G$-invariant closed set $Y=\mathfrak{R}_{r}(G)=\hom(F_{r},G)\cong G^r$
in a complex vector space $\mathbb{V}$, as follows.
According to Remark \ref{rem:1}, we will assume, from now on, the following
commutative diagram of inclusions,
\begin{equation}
\begin{array}{ccccccc}
\mathrm{O}(n) & \subset & \GL(n,\mathbb{R}) & \subset & \GL(n,\mathbb{C}) & \subset & \mathfrak{gl}(n,\C)
\cong\mathbb{C}^{n^{2}}\\
\cup &  & \cup &  & \cup\\
K & \subset & G & \subset & \mathbf{G},\end{array}\label{eq:inclusions}
\end{equation}
where $G\subset\GL(n,\mathbb{R})$ is a closed subgroup. Note that the commuting square on the left
is guaranteed by one of the versions of the Peter-Weyl theorem (see, for example, \cite{knapp:2002}).

\begin{rem}\label{concreteremark}
As we consider $G$ embedded in some $\GL(n,\C)$ as a closed subgroup, the involutions $\tau,\sigma,\theta$ and $\Theta$, mentioned in Subsection \ref{subsec:Cartan}, become explicit. Indeed, under the inclusions $\mathfrak{g}\subset\mathfrak{gl}(n,\mathbb{R})$,
$\mathfrak{g}^{\mathbb{C}}\subset\mathfrak{gl}(n,\mathbb{C})$ and $G\subset \mathrm{GL}(n,\mathbb{R})$ we have $\tau(A)=-A^{*}$, where
$*$ denotes transpose conjugate, and $\sigma(A)=\bar{A}$. Hence, the Cartan involution is given by $\theta(A)=-A^{t}$, so that $\Theta(g)=(g^{-1})^{t}$. From now on, we will use these particular involutions.
\end{rem}

From \eqref{eq:inclusions} we obtain the embedding of $K^{r}$ ($r\in\mathbb{N}$)
into the vector space given by the product of the spaces of all $n$-square
complex matrices, which we denote by $\VV$: \[
\mathfrak{gl}(n,\C)^{r}\cong\C^{rn^{2}}=:\VV.\]
The adjoint representation of $\GL(n,\C)$ in $\mathfrak{gl}(n,\C)$
restricts to a representation \[
G\to\Aut(\VV)\]
given by \begin{equation}
g\cdot(X_{1},\ldots,X_{r})=(gX_{1}g^{-1},\ldots,gX_{r}g^{-1}),\hspace{0.2cm}g\in G,\ X_{i}\in\mathfrak{gl}(n,\C).\label{eq:G action}\end{equation}
Moreover, \eqref{eq:G action} yields a representation \[
\lieg\to\End(\VV)\]
of the Lie algebra $\lieg$ of $G$ in $\VV$ given by the Lie brackets:
\begin{equation}
A\cdot(X_{1},\ldots,X_{r})=(AX_{1}-X_{1}A,\ldots,AX_{r}-X_{r}A)=([A,X_{1}],\ldots,[A,X_{r}])\label{eq:rep of Lie(G)}\end{equation}
for every $A\in\lieg$ and $X_{i}\in\mathfrak{gl}(n,\C)$. In what
follows, the context will be clear enough to distinguish the notations
\eqref{eq:G action} and \eqref{eq:rep of Lie(G)}.

We choose a inner product $\la\,,\ra$ in $\mathfrak{gl}(n,\C)$
which is $K$-invariant, under the restriction of the representation
$\GL(n,\C)\to\Aut(\mathfrak{gl}(n,\C))$ to $K$. From this we obtain
a inner product on $\VV$, $K$-invariant by the corresponding diagonal
action of $K$: \begin{equation}
\la(X_{1},\ldots,X_{r}),(Y_{1},\ldots,Y_{r})\ra=\sum_{i=1}^{r}\la X_{i},Y_{i}\ra\label{eq: inner product on VV K-invariant}\end{equation}
for $X_{i},Y_{j}\in\mathfrak{gl}(n,\C)$. In $\mathfrak{gl}(n,\C)$,
$\la\,,\,\ra$ can be given explicitly by $\la A,B\ra=\tr(A^{*}B)$.

We can now prove one of the main results.
\begin{thm}
\label{main} The spaces $\mathfrak{X}_{r}(G)=\mathfrak{R}_{r}(G)/\!/G$ and $\mathfrak{X}_{r}(K)=\hom(F_{r},K)/K\cong K^{r}/K$
have the same homotopy type.\end{thm}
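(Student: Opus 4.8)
The plan is to compare both $\mathfrak{X}_{r}(G)$ and $\mathfrak{X}_{r}(K)$ to the single intermediate space $\mathfrak{R}_{r}(G)/K$, using the Kempf-Ness machinery of Theorem \ref{thm:RS} on one side and Corollary \ref{sdr1} on the other. I would take $Y := \mathfrak{R}_{r}(G) \cong G^{r}$, regarded as a closed $G$-invariant subset of $\VV = \mathfrak{gl}(n,\C)^{r}$ under the conjugation action \eqref{eq:G action} and with the $K$-invariant inner product \eqref{eq: inner product on VV K-invariant}. (One should first check that this $Y$ really is closed in $\VV$, so that Theorem \ref{thm:RS} applies.) Applying that theorem to this $Y$ delivers two things simultaneously: the map $\eta : \mathcal{KN}_{G}^{Y}/K \to Y\quot G = \mathfrak{X}_{r}(G)$ is a homeomorphism, and there is a $K$-equivariant deformation retraction of $Y$ onto the Kempf-Ness set $\mathcal{KN}_{G}^{Y}$.

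Next I would push this $K$-equivariant retraction down to orbit spaces. Write $R : [0,1]\times Y \to Y$ for the retraction onto $\mathcal{KN}_{G}^{Y}$ and $q : Y \to Y/K$ for the quotient map. Since $R$ commutes with the $K$-action and $K$ is compact, $q$ is closed and $\mathrm{id}_{[0,1]}\times q$ is again a quotient map (as $[0,1]$ is locally compact Hausdorff), so $R$ induces a continuous homotopy $\overline{R} : [0,1]\times(Y/K) \to Y/K$ realizing $\mathcal{KN}_{G}^{Y}/K$ as a deformation retract of $Y/K = \mathfrak{R}_{r}(G)/K$. Together with the homeomorphism $\eta$ this gives $\mathfrak{X}_{r}(G) \cong \mathcal{KN}_{G}^{Y}/K \simeq \mathfrak{R}_{r}(G)/K$. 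On the other side, Corollary \ref{sdr1} already asserts that $\mathfrak{X}_{r}(K)$ is a strong deformation retract of $\mathfrak{R}_{r}(G)/K$, so $\mathfrak{X}_{r}(K) \simeq \mathfrak{R}_{r}(G)/K$. Concatenating,
\[
\mathfrak{X}_{r}(G) \;\cong\; \mathcal{KN}_{G}^{Y}/K \;\simeq\; \mathfrak{R}_{r}(G)/K \;\simeq\; \mathfrak{X}_{r}(K),
\]
which is exactly the assertion that the two spaces share a homotopy type.

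The genuinely delicate point, and the step I expect to be the main obstacle, is the descent of the $K$-equivariant retraction to the quotient: one must be sure that $K$-equivariance produces a well-defined continuous retraction downstairs rather than merely a fiberwise one. This is precisely where compactness of $K$ is indispensable, since it guarantees both that $Y/K$ is Hausdorff and that $q$ (hence $\mathrm{id}\times q$) is a quotient map, validating the universal-property argument. Apart from this, the proof is a formal concatenation of the two cited results once the hypotheses of Theorem \ref{thm:RS} are verified for $Y = \mathfrak{R}_{r}(G)$.
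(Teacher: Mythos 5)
Your proposal is correct and follows essentially the same route as the paper: the paper's proof is exactly the chain $\mathfrak{X}_{r}(K)\simeq\mathfrak{R}_{r}(G)/K\simeq\mathcal{KN}_{G}^{Y}/K\cong\mathfrak{X}_{r}(G)$ obtained by combining Corollary \ref{sdr1} with the two conclusions of Theorem \ref{thm:RS} for $Y=G^{r}$. Your extra care in descending the $K$-equivariant retraction to the orbit space (using compactness of $K$ so that the quotient map behaves well) is a point the paper leaves implicit, but it is the same argument.
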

\begin{proof}
By the strong deformation retraction from Corollary \ref{sdr1}, we
have that $\mathfrak{X}_{r}(K)$ and $\mathfrak{R}_{r}(G)/K$ have
the same homotopy type. From Theorem \ref{thm:RS}, putting $Y=G^{r}=\mathfrak{R}_{r}(G)\subset \VV$,
we deduce that $\mathfrak{R}_{r}(G)/K$ and $\mathcal{KN}_{G}^{Y}/K$
have the same homotopy type. Again by Theorem \ref{thm:RS}, we also
have that $\mathcal{KN}_{G}^{Y}/K$ is homeomorphic to $\mathfrak{X}_{r}(G)$.
Now, as homotopy equivalence is transitive, we conclude that $\mathfrak{X}_{r}(K)$
and $\mathfrak{X}_{r}(G)$ have the same homotopy type.
\end{proof}

\begin{cor}
The homotopy type of the space $\mathfrak{X}_{r}(G)$ depends only
on the maximal compact subgroup $K$ of $G$. In other words, given
two real Lie groups $G_{1}$ and $G_{2}$ verifying our assumptions,
and which have isomorphic maximal compact subgroups, then $\mathfrak{X}_{r}(G_{1})$
and $\mathfrak{X}_{r}(G_{2})$ have the same homotopy type. \end{cor}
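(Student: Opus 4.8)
The plan is to deduce this statement directly from Theorem \ref{main}, which already identifies the homotopy type of $\mathfrak{X}_{r}(G)$ with that of $\mathfrak{X}_{r}(K)$ for an arbitrary real $K$-reductive Lie group $G$. Concretely, the argument rests on two ingredients: the theorem itself, applied separately to each group, and the naturality of the $K$-character variety construction $K\mapsto\mathfrak{X}_{r}(K)$ under isomorphisms of compact Lie groups. Given $G_{1}$ and $G_{2}$ with maximal compact subgroups $K_{1}$ and $K_{2}$ respectively, I would first invoke Theorem \ref{main} twice to obtain that $\mathfrak{X}_{r}(G_{1})$ and $\mathfrak{X}_{r}(K_{1})$ have the same homotopy type, and likewise that $\mathfrak{X}_{r}(G_{2})$ and $\mathfrak{X}_{r}(K_{2})$ have the same homotopy type.

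The second step is to verify that an isomorphism $\phi\colon K_{1}\to K_{2}$ of Lie groups induces a homeomorphism $\mathfrak{X}_{r}(K_{1})\cong\mathfrak{X}_{r}(K_{2})$. Since $\mathfrak{X}_{r}(K)=\hom(F_{r},K)/K\cong K^{r}/K$, the isomorphism $\phi$ yields a homeomorphism $\phi^{r}\colon K_{1}^{r}\to K_{2}^{r}$, and the key observation is that $\phi^{r}$ intertwines the two diagonal conjugation actions, because $\phi(khk^{-1})=\phi(k)\phi(h)\phi(k)^{-1}$ for all $k,h\in K_{1}$. Consequently $\phi^{r}$ descends to a continuous bijection on the orbit spaces, whose inverse is induced by $\phi^{-1}$, so the two $K$-character varieties are homeomorphic, hence a fortiori of the same homotopy type.

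Finally, I would chain these identifications using the transitivity of homotopy equivalence: $\mathfrak{X}_{r}(G_{1})$ has the same homotopy type as $\mathfrak{X}_{r}(K_{1})$, which is homeomorphic to $\mathfrak{X}_{r}(K_{2})$, which in turn has the same homotopy type as $\mathfrak{X}_{r}(G_{2})$. The first sentence of the corollary then follows, since all the dependence of $\mathfrak{X}_{r}(G)$ on $G$ (beyond the fixed rank $r$) has been absorbed into the isomorphism class of $K$. I do not anticipate any genuine obstacle here: the entire mathematical content lies in Theorem \ref{main}, and the only thing left to check is the functoriality of the assignment $K\mapsto\mathfrak{X}_{r}(K)$ with respect to group isomorphisms, which is routine because the construction is defined purely in terms of the group multiplication and the conjugation action.
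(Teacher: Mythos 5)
Your proposal is correct and follows essentially the same route as the paper, which simply applies Theorem \ref{main} to each $G_{i}$ and notes that both have $K$ as maximal compact; your additional verification that an isomorphism $K_{1}\cong K_{2}$ induces a homeomorphism $\mathfrak{X}_{r}(K_{1})\cong\mathfrak{X}_{r}(K_{2})$ is a routine detail the paper leaves implicit.
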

\begin{proof}
Since both $G_{1}$ and $G_{2}$ have $K$ as maximal compact, this
follows immediately from Theorem \ref{main}.
\end{proof}

\subsection{Deformation retraction from $\mathfrak{X}_{r}(G)$ onto $\mathfrak{X}_{r}(K)$}

Now, we want to show that $\mathfrak{X}_{r}(K)$ is indeed a deformation
retraction of $\mathfrak{X}_{r}(G)$.

For the $G$-invariant space $Y=\mathfrak{R}_{r}(G)\cong G^{r}$,
the Kempf-Ness set $\mathcal{KN}_{G}^{Y}\subset\mathbb{V}$, includes
the $K$-invariant subspace $Y=\hom(F_{r},K)\cong K^{r}$, and can
be characterized in concrete terms as follows.
\begin{prop}
\label{pro:general-KN}For $Y=\mathfrak{R}_{r}(G)\cong G^{r}\subset\mathbb{V}$,
the Kempf-Ness set is the closed set given by: \[
\mathcal{KN}_{G}^{Y}=\left\{(g_{1},\cdots,g_{r})\in G^{r}:\ \sum_{i=1}^{r}g_{i}^{*}g_{i}=\sum_{i=1}^{r}g_{i}g_{i}^{*}\right\}.\]
In particular, since $K$ is precisely the fixed set of the Cartan
involution, we have the inclusion $K^{r}\cong\hom(F_{r},K)\subset\mathcal{KN}_{G}^{Y}$.
The Kempf-Ness set is a real algebraic set, when $G$ is algebraic.\end{prop}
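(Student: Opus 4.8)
The plan is to apply the characterization of the Kempf-Ness set from Theorem \ref{thm:belong-to-KN}, using criterion (3): a point $X = (g_1,\dots,g_r) \in \VV$ lies in $\mathcal{KN}_G$ if and only if $\la A \cdot X, X \ra = 0$ for every $A \in \liep$. First I would substitute the explicit inner product $\la A, B \ra = \tr(A^* B)$ and the Lie-algebra action \eqref{eq:rep of Lie(G)} into this condition. Explicitly, since $A \cdot X = ([A,g_1],\dots,[A,g_r])$, the pairing unfolds as
\[
\la A \cdot X, X \ra = \sum_{i=1}^{r} \tr\!\left( [A,g_i]^* \, g_i \right)
= \sum_{i=1}^{r} \tr\!\left( (A g_i - g_i A)^* g_i \right).
\]
The central computation is to simplify this trace using the concrete involutions from Remark \ref{concreteremark}. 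There, $\liep = \lieg \cap i\liek'$ and the Cartan involution is $\theta(A) = -A^t$ with $\tau(A) = -A^*$; elements $A \in \liep$ satisfy $\theta(A) = -A$, i.e.\ $A^t = A$, while being skew-Hermitian-adjacent in the appropriate sense. I would expand $(Ag_i - g_iA)^* = g_i^* A^* - A^* g_i^*$ and regroup terms, using the cyclic invariance of the trace, so that the sum over $i$ collapses into a single trace of the form $\tr\!\bigl(A^*(\sum_i g_i g_i^* - \sum_i g_i^* g_i)\bigr)$ or its conjugate, up to sign conventions I would need to fix carefully against the definition of $\liep$.

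Once the condition is reduced to $\tr\!\bigl(A \cdot M\bigr) = 0$ for all $A \in \liep$, where $M := \sum_i g_i^* g_i - \sum_i g_i g_i^*$, the next step is to argue that this forces $M = 0$. The key observation is that $M$ is Hermitian, and in fact $M$ lies in (the relevant real span of) $i\liep$ or $\liep$ itself, so that the nondegeneracy of the trace form on $\liep$ can be invoked: the pairing $(A,B) \mapsto \tr(AB)$ (or $\tr(A^*B)$) restricted to $\liep$ is definite, hence $\tr(AM) = 0$ for all $A \in \liep$ together with $M \in \liep$ yields $M = 0$. This gives exactly the stated equation $\sum_i g_i^* g_i = \sum_i g_i g_i^*$. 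I expect this matching of $M$ into the correct summand of the Cartan decomposition to be the main obstacle: one must verify that $M$ really belongs to the subspace paired nondegenerately by the trace form, rather than having a spurious component in $\liek$ that would not be detected by testing against $\liep$. This hinges on the symmetry properties $g_i^* g_i$ and $g_i g_i^*$ being Hermitian and on $\liep$ consisting of the Hermitian (symmetric) directions under the chosen involution.

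For the remaining assertions: the inclusion $K^r \subset \mathcal{KN}_G^Y$ is immediate, since $g_i \in K$ means $g_i$ is unitary, so $g_i^* g_i = \mathrm{Id} = g_i g_i^*$ and the defining equation holds termwise; alternatively, $K = \Fix(\Theta)$ consists of fixed points of the Cartan involution, which is precisely the statement that these are minimal vectors. Finally, that $\mathcal{KN}_G^Y$ is a real algebraic set when $G$ is algebraic follows because it is cut out inside the algebraic set $G^r$ by the polynomial equations $\sum_i g_i^* g_i - \sum_i g_i g_i^* = 0$ in the matrix entries and their conjugates, consistent with the general remark after Theorem \ref{thm:belong-to-KN} that $\mathcal{KN}_G$ is closed and algebraic.
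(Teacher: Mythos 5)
Your proposal is correct and takes essentially the same route as the paper: apply criterion (3) of Theorem \ref{thm:belong-to-KN}, unwind the action \eqref{eq:rep of Lie(G)} and the inner product \eqref{eq: inner product on VV K-invariant} with $\la A,B\ra=\tr(A^{*}B)$, and use cyclicity of the trace to reduce the minimality condition to $\la A,\sum_{i}(g_{i}g_{i}^{*}-g_{i}^{*}g_{i})\ra=0$ for all $A\in\liep$, with the inclusion of $K^{r}$ and the algebraicity following just as you say. The only small divergence is at the final nondegeneracy step, precisely the point you flag as the main obstacle: where you propose to locate $M=\sum_{i}(g_{i}^{*}g_{i}-g_{i}g_{i}^{*})$ inside $\liep$, the paper instead notes that the vanishing against $A\in\liek$ is automatic by $K$-invariance of the norm, so the pairing vanishes against all of $\lieg=\liek\oplus\liep$, and then invokes nondegeneracy.
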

\begin{proof}
By Theorem \ref{thm:belong-to-KN} (3), an element $g=(g_{1},\cdots,g_{r})\in G^{r}\subset\GL(n,\mathbb{C})^{r}$
is in the Kempf-Ness set, if and only if \[
\la A\cdot g,g\ra=\left\langle \left ([A,g_{1}],\cdots,[A,g_{r}]\right ),\,(g_{1},\cdots,g_{r})\right\rangle =0,\]
for every $A\in\liep$ (see \eqref{eq:Cartan decomp of g}), where we used formula \eqref{eq:rep of Lie(G)}.
Using \eqref{eq: inner product on VV K-invariant}, this means that,
for all $A\in\liep$, we have
\begin{eqnarray*}
0 & = & \left\langle \left ( [A,g_{1}],\cdots,[A,g_{r}]\right ),\,(g_{1},\cdots,g_{r})\right\rangle =\sum_{i=1}^{r}\left\langle Ag_{i}-g_{i}A,\, g_{i}\right\rangle =\\
 & = & \sum_{i=1}^{r}\left(\left\langle Ag_{i},\, g_{i}\right\rangle -\left\langle g_{i}A,\, g_{i}\right\rangle\right) =\sum_{i=1}^{r}\left(\tr(g_{i}^{*}A^{*}g_{i})-\tr(A^{*}g_{i}^{*}g_{i})\right)=\\
 & = & \sum_{i=1}^{r}\left(\tr(A^{*}g_{i}g_{i}^{*})-\tr(A^{*}g_{i}^{*}g_{i})\right)=\sum_{i=1}^{r}\left\langle A,g_{i}g_{i}^{*}-g_{i}^{*}g_{i}\right\rangle \\
 & = & \left\langle A,\sum_{i=1}^{r}(g_{i}g_{i}^{*}-g_{i}^{*}g_{i})\right\rangle \end{eqnarray*}
 Here, we used bilinearity of $\la\,,\ra$ and the
cyclic permutation property of the trace. In fact, the last expression should
vanish for all $A\in\lieg=\liek\oplus\liep$ (by $K$-invariance of
the norm, the vanishing for $A\in\liek$ is automatic). So, since
$\la\,,\ra$ is a nondegenerate pairing, we conclude
that $X\in\mathcal{KN}_{G}^{Y}$ if and only if \[
\sum_{i=1}^{r}(g_{i}g_{i}^{*}-g_{i}^{*}g_{i})=0, \, \forall (g_1,\cdots g_r)\in G^r \]
as wanted. The last two sentences are immediate consequences.
\end{proof}

Recall that a matrix $A\subset\GL(n,\mathbb{C})$ is called \emph{normal}
if $A^{*}A=AA^{*}$. So, when $r=1$, the previous proposition says
that $\mathcal{KN}_{G}^{Y}=\{g\in G:\ g\mbox{ is normal}\}\subset \mathfrak{R}_{1}(G)$.

The following characterization, then follows directly from Theorem \ref{thm:RS}.
\begin{prop}
\label{pro:r1-KN}
When $r=1$, the character variety $\mathfrak{X}_{1}(G)=G\quot G$
is homeomorphic to the orbit space of the set of normal matrices in
$G$, under conjugation by $K$.
\end{prop}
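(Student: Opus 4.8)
The plan is to specialize the general Kempf-Ness machinery assembled above to the single-generator case. First I would take $Y=\mathfrak{R}_{1}(G)\cong G$, regarded as a closed $G$-invariant subset of $\VV=\mathfrak{gl}(n,\C)$ through the inclusions \eqref{eq:inclusions}, with $G$ acting by conjugation as in \eqref{eq:G action}. This is exactly the $r=1$ instance of the space $Y=G^{r}$ already used in the proof of Theorem \ref{main}, so Theorem \ref{thm:RS} applies directly and produces a homeomorphism
\[
\eta:\mathcal{KN}_{G}^{Y}/K\longrightarrow Y\quot G.
\]
Since by definition $Y\quot G=G\quot G=\mathfrak{X}_{1}(G)$, the only remaining task is to describe the Kempf-Ness set $\mathcal{KN}_{G}^{Y}$ in concrete terms.

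For that identification I would invoke Proposition \ref{pro:general-KN} with $r=1$. Its defining condition $\sum_{i=1}^{r}g_{i}^{*}g_{i}=\sum_{i=1}^{r}g_{i}g_{i}^{*}$ collapses to the single equation $g^{*}g=gg^{*}$, which is precisely the condition that $g\in G$ be a normal matrix. Hence $\mathcal{KN}_{G}^{Y}=\{g\in G:\ g\text{ is normal}\}$, exactly as recorded in the remark immediately preceding the statement. The conjugation action of $K$ preserves this set, since it is cut out by a $K$-invariant condition (the inner product $\la A,B\ra=\tr(A^{*}B)$ being $K$-invariant by construction).

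Combining these two observations, the homeomorphism $\eta$ identifies $\mathfrak{X}_{1}(G)$ with the orbit space of the set of normal matrices in $G$ under the conjugation action of $K$, which is the desired conclusion.

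I do not anticipate a genuine obstacle here, as the statement is a direct specialization: all of the analytic content, namely the construction of $\eta$, its bijectivity, and the fact that it is a homeomorphism, is already packaged in Theorem \ref{thm:RS}, and the explicit form of the Kempf-Ness set is supplied by Proposition \ref{pro:general-KN}. The only point deserving a moment's attention is the verification that $Y=G$ is closed and $G$-invariant in $\VV$ so that Theorem \ref{thm:RS} is applicable; but this is the very hypothesis already in force in the proof of Theorem \ref{main}, so nothing new needs to be established.
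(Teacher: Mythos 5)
Your argument is correct and is essentially the paper's own: the paper likewise obtains the result by combining Theorem \ref{thm:RS} (applied to $Y=\mathfrak{R}_{1}(G)\cong G$) with the $r=1$ specialization of Proposition \ref{pro:general-KN}, which identifies $\mathcal{KN}_{G}^{Y}$ with the normal matrices in $G$. No differences worth noting.
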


Now, to prove that $\mathfrak{X}_{r}(K)$ is  a deformation
retraction of $\mathfrak{X}_{r}(G)$ we need to further assume, due to a technical point, that $G$ is algebraic. First, we have the following lemma.
\begin{lem}\label{subcomplex}
Assume that $G$ and $K$ are as before, and furthermore that $G$ is a real algebraic set.
There is a natural inclusion of finite CW-complexes $\mathfrak{X}_{r}(K)\subset\mathfrak{X}_{r}(G)$. \end{lem}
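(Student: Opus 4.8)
The plan is to realize the pair $(\mathfrak{X}_{r}(G),\mathfrak{X}_{r}(K))$ as a pair of semi-algebraic sets and then invoke the relative triangulation theorem for semi-algebraic sets, so that the compact piece $\mathfrak{X}_{r}(K)$ becomes an honest subcomplex. First I would pin down the ``natural inclusion.'' By Proposition \ref{pro:general-KN} we have the $K$-equivariant inclusion $\hom(F_{r},K)\cong K^{r}\subseteq\mathcal{KN}_{G}^{Y}$, where $Y=\mathfrak{R}_{r}(G)\cong G^{r}$; passing to $K$-quotients and composing with the homeomorphism $\eta\colon\mathcal{KN}_{G}^{Y}/K\to\mathfrak{X}_{r}(G)$ of Theorem \ref{thm:RS} produces a continuous injection $\mathfrak{X}_{r}(K)=K^{r}/K\hookrightarrow\mathfrak{X}_{r}(G)$. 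This is the map to be promoted to a subcomplex inclusion.

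Next I would fix the semi-algebraic structures, and this is exactly where the hypothesis that $G$ is algebraic enters: then $Y=G^{r}$ is a real algebraic subset of $\VV$, so Theorem \ref{thm:RS} realizes $\mathfrak{X}_{r}(G)\cong\mathcal{KN}_{G}^{Y}/K$ as a \emph{closed} semi-algebraic subset of some $\R^{d}$, through $K$-invariant polynomials. Since these Richardson--Slodowy identifications are built from invariants in the algebraic setting, they are semi-algebraic; combined with the fact (noted in the text) that $\mathfrak{X}_{r}(K)=K^{r}/K$ is compact and semi-algebraic, this exhibits $\mathfrak{X}_{r}(K)$ as a compact semi-algebraic subset of the closed semi-algebraic set $\mathfrak{X}_{r}(G)\subseteq\R^{d}$ (its image being semi-algebraic by Tarski--Seidenberg applied to the semi-algebraic map above).

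With this in hand I would apply the triangulation theorem for semi-algebraic sets in its relative form (Hironaka; see also Bochnak--Coste--Roy): for a bounded semi-algebraic set $S$ and finitely many semi-algebraic subsets there is a finite simplicial complex $K$ and a semi-algebraic homeomorphism $|K|\to\overline{S}$ carrying $S$ and each prescribed subset onto a union of open simplices; for unbounded $\mathfrak{X}_{r}(G)$ one first composes with a semi-algebraic homeomorphism $\R^{d}\to B$ onto a bounded open ball. Applying this to $\mathfrak{X}_{r}(G)$ with the distinguished subset $\mathfrak{X}_{r}(K)$ endows both spaces with compatible simplicial, hence CW, structures in which $\mathfrak{X}_{r}(K)$ is a union of open cells. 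The decisive point is then a compactness argument: a union of open simplices in a finite complex is compact if and only if it is closed, i.e.\ a genuine finite subcomplex (if it contains $\Delta^{\circ}$ and is closed it contains $\overline{\Delta^{\circ}}=\Delta$, hence all faces). Since $\mathfrak{X}_{r}(K)$ is compact, its preimage $L$ is such a subcomplex, so $\mathfrak{X}_{r}(K)\cong|L|$ is a finite CW-complex included as a subcomplex of the triangulation of $\mathfrak{X}_{r}(G)$.

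The main obstacle is the non-compactness of $\mathfrak{X}_{r}(G)$: a non-compact space is not literally a finite CW-complex, so ``finite'' must be read as ``finitely many cells,'' realized by triangulating the compact closure $\overline{\mathfrak{X}_{r}(G)}$ and exhibiting $\mathfrak{X}_{r}(G)$ as the corresponding finite union of open simplices. One must then verify that $\mathfrak{X}_{r}(K)$ sits as a closed subcomplex of this ambient structure, and (for the use in the following deformation-retraction result) that the NDR/cofibration structure of the simplicial pair $(\overline{\mathfrak{X}_{r}(G)},\mathfrak{X}_{r}(K))$ restricts to $(\mathfrak{X}_{r}(G),\mathfrak{X}_{r}(K))$. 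A secondary technical point, needed to triangulate the two sets simultaneously, is precisely the semi-algebraicity of the inclusion $\mathfrak{X}_{r}(K)\hookrightarrow\mathfrak{X}_{r}(G)$, which is guaranteed by the algebraicity of $G$ (and compactness, hence algebraicity, of $K$) together with the fact that all the Kempf--Ness identifications are given by invariant polynomials.
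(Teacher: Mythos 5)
Your proposal is correct and follows essentially the same route as the paper: the inclusion is obtained from $K^{r}\subset\mathcal{KN}_{G}^{Y}$ (Proposition \ref{pro:general-KN}) together with the Kempf--Ness identification of Theorem \ref{thm:RS} (the paper phrases the injectivity via Richardson--Slodowy's $G\cdot\rho\cap\mathcal{KN}_{G}^{Y}=K\cdot\rho$, which is the same content as your use of $\eta$ being a homeomorphism), and the subcomplex structure comes from the simultaneous semi-algebraic triangulation theorem of Bochnak--Coste--Roy, exactly the reference the paper invokes. Your treatment of the non-compactness of $\mathfrak{X}_{r}(G)$ and of the semi-algebraicity of the identifications is in fact more careful than the paper's.
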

\begin{proof}
We need to show that the natural composition: \[
\mathfrak{X}_{r}(K)\to\mathfrak{R}_{r}(G)/K\to\mathfrak{X}_{r}(G)\]
does not send two distinct $K$-orbits to a single $G$-orbit.
This follows by Remark 4.7 in \cite{Florentino-Lawton:2013a}, and the polar decomposition discussed in Subsection \ref{polar}.
However, we prove it directly as follows.  It is equivalent to showing that given $\rho_{1},\rho_{2}\in\mathfrak{R}_{r}(K)$
such that $\rho_{2}=g\cdot\rho_{1}$ for some $g\in G$, then $\rho_{1}$
and $\rho_{2}$ are in the same $K$-orbit. Using $Y=\mathfrak{R}_{r}(G)$,
we have $\mathfrak{R}_{r}(K)\subset\mathcal{KN}_{G}^{Y}$ by Proposition
\ref{pro:general-KN}. On the other hand, Richardson-Slodowy showed
that $G\cdot\rho_{1}\cap\mathcal{KN}_{G}^{Y}=K\cdot\rho_{1}$ (Theorem
4.3 in \cite{rich-slod:1990}), which is enough to prove that $\mathfrak{X}_{r}(K) \subset \mathfrak{X}_{r}(G)$. We know that $\mathfrak{X}_{r}(K)$ is a semi-algebraic set and is closed. By Theorem \ref{thm:RS},  since we have assumed $G$ is algebraic, $\mathfrak{X}_{r}(G)$ is also a semi-algebraic set and closed. Thus, $\mathfrak{X}_{r}(K)$ can be considered as a semi-algebraic subset of $\mathfrak{X}_{r}(G)$. It is known that all semi-algebraic sets are cellular. Now, from page 214 of \cite{BCR98} we get that $\mathfrak{X}_{r}(K)$ is a sub-complex of $\mathfrak{X}_{r}(G)$.
\end{proof}

\begin{thm}\label{thm:maintheorem}
There is a strong deformation retraction from $\mathfrak{X}_{r}(G)$
to $\mathfrak{X}_{r}(K)$. \end{thm}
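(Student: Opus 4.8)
The plan is \emph{not} to transport the Cartan--decomposition retraction of Corollary~\ref{sdr1} directly, because that retraction lives on $\mathfrak{R}_{r}(G)/K=G^{r}/K$ rather than on $\mathfrak{X}_{r}(G)$, and these two spaces, although homotopy equivalent, are not homeomorphic. Indeed the flow $f_{t}$ sends $g_{i}=k_{i}\exp(X_{i})$ to $k_{i}\exp((1-t)X_{i})$, which alters $g_{i}g_{i}^{*}=k_{i}\exp(2X_{i})k_{i}^{-1}$ relative to $g_{i}^{*}g_{i}=\exp(2X_{i})$, so $f_{t}$ does not preserve the balancing condition of Proposition~\ref{pro:general-KN} and hence does not preserve the Kempf--Ness set $\mathcal{KN}_{G}^{Y}$. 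I would therefore argue abstractly in two steps: first show that the inclusion $\mathfrak{X}_{r}(K)\hookrightarrow\mathfrak{X}_{r}(G)$ of Lemma~\ref{subcomplex} is itself a homotopy equivalence (not merely that the two spaces share a homotopy type, as in Theorem~\ref{main}), and then invoke the classical fact that a CW pair whose inclusion is a homotopy equivalence admits a strong deformation retraction of the total space onto the subspace.

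For the first step, I would realise the inclusion as the composite
\[
\mathfrak{X}_{r}(K)=K^{r}/K\ \xrightarrow{\ \bar\iota\ }\ G^{r}/K\ \xrightarrow{\ \bar R\ }\ \mathcal{KN}_{G}^{Y}/K\ \xrightarrow{\ \eta\ }\ \mathfrak{X}_{r}(G),
\]
where $\bar\iota$ is induced by $K^{r}\subset G^{r}$, the map $\bar R$ is induced by the $K$-equivariant deformation retraction of $G^{r}=\mathfrak{R}_{r}(G)$ onto $\mathcal{KN}_{G}^{Y}$ provided by Theorem~\ref{thm:RS}, and $\eta$ is the homeomorphism of the same theorem. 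By Corollary~\ref{sdr1} the map $\bar\iota$ is a homotopy equivalence, and since a $K$-equivariant deformation retraction descends to a deformation retraction of the $K$-quotients, $\bar R$ is also a homotopy equivalence; as $\eta$ is a homeomorphism, the composite is a homotopy equivalence. It remains to check that this composite is exactly the inclusion of Lemma~\ref{subcomplex}: by Proposition~\ref{pro:general-KN} we have $K^{r}\subset\mathcal{KN}_{G}^{Y}$, so the retraction fixes $K^{r}$ pointwise, whence $\bar R\circ\bar\iota$ is the inclusion $K^{r}/K\hookrightarrow\mathcal{KN}_{G}^{Y}/K$, and $\eta$ then sends the $K$-orbit of $\rho\in K^{r}$ to its $G$-orbit, which is precisely the map $K\cdot\rho\mapsto G\cdot\rho$ of Lemma~\ref{subcomplex}.

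For the second step, Lemma~\ref{subcomplex}---which is exactly where the hypothesis that $G$ is algebraic is used---exhibits $\mathfrak{X}_{r}(K)$ as a subcomplex of the finite CW complex $\mathfrak{X}_{r}(G)$, so the inclusion is a cofibration. Combined with the homotopy equivalence established above, the classical result on CW pairs now yields a strong deformation retraction of $\mathfrak{X}_{r}(G)$ onto $\mathfrak{X}_{r}(K)$, as required.

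The step I expect to be the main obstacle is the first one: the content of Theorem~\ref{main} is only that $\mathfrak{X}_{r}(G)$ and $\mathfrak{X}_{r}(K)$ have the same homotopy type, and one must upgrade this to the statement that the honest inclusion map realises the equivalence. The care needed is in verifying that the Kempf--Ness retraction genuinely restricts to the identity on $K^{r}$ and that all three maps descend compatibly to the $K$-orbit spaces, so that the factorisation above commutes on the nose; once commutativity is in hand, the homotopy-equivalence conclusion follows formally.
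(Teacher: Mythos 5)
Your proposal is correct and follows essentially the same route as the paper: both reduce to showing that the inclusion $\mathfrak{X}_{r}(K)\hookrightarrow\mathfrak{X}_{r}(G)$ is a homotopy equivalence by combining Corollary~\ref{sdr1} with the Kempf--Ness results of Theorem~\ref{thm:RS}, and then upgrade to a strong deformation retraction via the CW-pair structure from Lemma~\ref{subcomplex}. The only cosmetic difference is that you compose with the retraction $\bar R$ to get a genuine homotopy equivalence directly, whereas the paper checks that the inclusion induces isomorphisms on all homotopy groups (via the factorization $i=j\circ\phi$) and then invokes Whitehead's theorem.
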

\begin{proof}
Proposition \ref{pro:general-KN} implies the following diagram is commutative:
\[
\begin{array}{cccc}
K^r/K & \stackrel{i}{\hookrightarrow} & G^r/K\\
\phi \hookdownarrow &  & \Vert\\
\mathcal{KN}_{G}^{G^r}/K &  \stackrel{j}{\hookrightarrow} & G^r/K\end{array}
\]

Corollary \ref{sdr1} and Theorem \ref{thm:RS}, imply the maps $i$ and $j$
induce isomorphisms on all homotopy groups; that is,
\[
i_{n}\,:\,\pi_{n}(K^r/K)\,\longrightarrow\, \pi_n(G^r/K) ~\, \text{~and~}\,
~ j_{n}\,:\, \pi_n(\mathcal{KN}_{G}^{G^r}/K)\,\longrightarrow\, \pi_n(G^r/K)
\]
are isomorphisms for all $n\,\geq\, 0$.

Thus, $\phi$ induces isomorphisms on all homotopy groups as well since $i=j\circ \phi$. Then, Lemma \ref{subcomplex} and Whitehead's theorem (see \cite[Theorem 4.5]{Hat}) imply $K^r/K$ is a strong deformation retraction of $\mathcal{KN}_{G}^{G^r}/K\cong G^r\quot G$.
\end{proof}

\begin{remark}
In \cite{knapp:2002}, Theorem 6.31, it is shown that any semisimple Lie group $G$, even those not considered in this paper like $\widetilde{\SL(2,\R)}$, admits a Cartan involution.  The fixed subspace of this involution defines a subgroup $H$, and $G$ deformation retracts onto $H$ equivariantly with respect to conjugation by $H$.  Now $H$ is not always the maximal compact; in fact it is, if and only if the center of $G$ is finite.  Nevertheless, we conclude that  $G^r/H$ is homotopic to $H^r/H$.

Additionally, it is a general result, proven independently by Malcev and later by Iwasawa $($\cite[Thm. 6]{Iwas}$)$, that every connected Lie group $G$, deformation retracts onto a maximal compact subgroup $H$ $($we thank the referee for the reference$)$.  If this deformation is $H$-equivariant, we likewise conclude $G^r/H$ is homotopic to $H^r/H$.

Either way, it would be interesting try to compare $G^r/G$ and $G^r/H$ without Kempf-Ness Theory available in these more general situations.
\end{remark}

\section{Poincar\'{e} Polynomials}

In this section, we describe the topology of some character varieties
and compute their Poincar\'{e} polynomials.

\subsection{Low rank unitary groups}

\begin{prop}
\label{prop:U(n)SU(n)} For any $r,n\in\N$, the following isomorphisms
hold:
\begin{itemize}
\item $\mathfrak{X}_{r}(\U(n))\cong\mathfrak{X}_{r}(\SU(n))\times_{(\Z/n\Z)^{r}}\mathrm{U}(1)^{r}$;
\item $\mathfrak{X}_{r}(\Or(n))\cong\mathfrak{X}_{r}(\SO(n))\times(\Z/2\Z)^{r}$,
if $n$ is odd.
\end{itemize}
\end{prop}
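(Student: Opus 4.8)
The plan is to express each of the larger groups as the corresponding special group together with a central abelian factor, and then to exploit the fact that conjugation is insensitive to scalars, so that the central factor passes through the conjugation quotient essentially untouched. Both isomorphisms will then reduce to an interchange-of-quotients argument, the only difference being that the relevant central extension is genuinely twisted in the unitary case and split in the orthogonal case.

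For the first isomorphism I would start from the surjection $p\colon\SU(n)\times\U(1)\to\U(n)$, $p(A,\lambda)=\lambda A$, which is onto because any $U\in\U(n)$ equals $\lambda A$ once $\lambda$ is chosen with $\lambda^{n}=\det U$ (then $\det(\lambda^{-1}U)=1$). Its kernel is $\{(\zeta^{-1}I,\zeta):\zeta^{n}=1\}\cong\Z/n\Z$, embedded antidiagonally, so $\U(n)\cong(\SU(n)\times\U(1))/(\Z/n\Z)$. Taking $r$-th Cartesian powers gives a surjection $\SU(n)^{r}\times\U(1)^{r}\to\U(n)^{r}$ with kernel $(\Z/n\Z)^{r}$. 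The two observations that make the passage to the conjugation quotient clean are: (a) since $\U(1)$ is central, the diagonal conjugation action of $\U(n)$ on $\U(n)^{r}$ is induced by conjugation of $\SU(n)$ on $\SU(n)^{r}$ and fixes the $\U(1)^{r}$ factor; and (b) the scalar action of $(\Z/n\Z)^{r}$ commutes with conjugation, hence descends to $\mathfrak{X}_{r}(\SU(n))=\SU(n)^{r}/\SU(n)$. Because these two actions commute, the conjugation quotient and the $(\Z/n\Z)^{r}$-quotient may be taken in either order, and interchanging them yields
\[
\mathfrak{X}_{r}(\U(n))\cong\bigl((\SU(n)^{r}/\SU(n))\times\U(1)^{r}\bigr)/(\Z/n\Z)^{r}=\mathfrak{X}_{r}(\SU(n))\times_{(\Z/n\Z)^{r}}\U(1)^{r},
\]
which is exactly the balanced product in the statement. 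As all groups involved are compact, the quotient maps are proper and the induced continuous bijection of orbit spaces is a homeomorphism.

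For the second isomorphism I would use that, when $n$ is odd, $-I\notin\SO(n)$ (since $\det(-I)=(-1)^{n}=-1$), so the map $\SO(n)\times\{\pm I\}\to\Or(n)$, $(A,\epsilon)\mapsto\epsilon A$, is a group isomorphism: injectivity follows from $\det(\epsilon A)=\epsilon$, and surjectivity by multiplying a determinant $-1$ element by $-I$. Thus $\Or(n)\cong\SO(n)\times\Z/2\Z$ with $\{\pm I\}$ central; here the extension splits, so there is no twisting. Repeating the argument of the previous paragraph, with conjugation factoring through $\SO(n)$ and fixing the central $(\Z/2\Z)^{r}$ factor, gives
\[
\mathfrak{X}_{r}(\Or(n))\cong(\SO(n)^{r}\times(\Z/2\Z)^{r})/\SO(n)\cong\mathfrak{X}_{r}(\SO(n))\times(\Z/2\Z)^{r}.
\]

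The main point requiring care will be the interchange of the two quotients in the first part, together with pinning down the twisting: one must check that scalar multiplication by $(\Z/n\Z)^{r}$ genuinely descends to $\mathfrak{X}_{r}(\SU(n))$ and that the antidiagonal embedding of the kernel is faithfully recorded by the balanced product (the $(\Z/n\Z)^{r}$ acting by scalars on $\mathfrak{X}_{r}(\SU(n))$ and by the inverse multiplication on $\U(1)^{r}$), both of which rest on the centrality of the scalars. In the orthogonal case the analogous subtlety is the splitting of the extension, which uses $-I\notin\SO(n)$ and therefore holds only for odd $n$; for even $n$ one has $-I\in\SO(n)$, the factorization above is no longer a direct product, and the argument breaks down — which is precisely why the hypothesis that $n$ is odd appears.
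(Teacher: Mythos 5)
Your proposal is correct and follows the same route the paper intends: the second item is proved exactly as in the paper (via $\Or(n)\cong\SO(n)\times\Z/2\Z$ for odd $n$, using $-I\notin\SO(n)$), and the first item is the standard central-extension argument via $\U(n)\cong(\SU(n)\times\U(1))/(\Z/n\Z)$ that the paper simply outsources to \cite{Florentino-Lawton:2012}. You just supply in full the details (commutation of the conjugation and deck-group quotients, compactness ensuring the bijection is a homeomorphism) that the paper handles by citation.
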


\proof The first item is proved in \cite[Theorem 2.4]{Florentino-Lawton:2012}.
The second item follows because, for $n$ odd, $\Or(n)$ is isomorphic
to $\SO(n)\times\Z/2\Z$. \endproof

In what follows, we consider cohomology with rational coefficients.

The Poincar\'{e} polynomial of $\mathfrak{X}_{r}(\SU(2))$ was calculated
by T. Baird in \cite[Theorem 7.2.4]{baird:2008}, using methods of
equivariant cohomology. His result is that \begin{equation}
P_{t}(\mathfrak{X}_{r}(\SU(2)))=1+t-\frac{t(1+t^{3})^{r}}{1-t^{4}}+\frac{t^{3}}{2}\left(\frac{(1+t)^{r}}{1-t^{2}}-\frac{(1-t)^{r}}{1+t^{2}}\right).\label{eq:Poincare poly Baird}\end{equation}

 From Proposition \ref{prop:U(n)SU(n)} and general results concerning finite quotients and rational cohomology
(see, for example  \cite{Bredon-transf}) we conclude that
\begin{equation}
\label{finite-quot}
H^{*}(\mathfrak{X}_{r}(\U(n))) \cong H^{*}(\mathfrak{X}_{r}(\SU(n))\times\mathrm{U}(1)^{r})^{(\Z/n\Z)^{r}}.
\end{equation}

For $n=2$, we now show the action on cohomology is trivial.  The argument for this theorem was suggested to us by T. Baird.

\begin{thm}\label{trivialaction}
The action of $(\Z/2\Z)^{r}$ on $H^{*}(\mathfrak{X}_{r}(\SU(2)))$ is trivial.
\end{thm}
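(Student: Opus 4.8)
The plan is to show that each standard generator $e_i$ of $(\Z/2\Z)^r$ acts as the identity on $H^*(\mathfrak{X}_r(\SU(2));\Q)$. This generator sends the class of $(A_1,\dots,A_r)$ to that of $(A_1,\dots,-A_i,\dots,A_r)$; since $-I$ is central in $\SU(2)$, the lift $\mu_i\colon \SU(2)^r\to\SU(2)^r$ multiplying the $i$-th coordinate by $-I$ commutes with the diagonal conjugation action, so it is $\SU(2)$-equivariant and descends to the involution $\iota_i$ on $\mathfrak{X}_r(\SU(2))$ whose triviality on cohomology we want. The conceptual heart is that, upstairs, $\mu_i$ is harmless: identifying $\SU(2)$ with the unit quaternions $S^3$, multiplication by $-I$ is the antipodal map on the $i$-th $S^3$ factor (times the identity on the others). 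The antipodal map on $S^3$ has degree $(-1)^{3+1}=+1$, hence acts as the identity on $H^*(S^3;\Q)$, and by the K\"unneth theorem $\mu_i^*=\mathrm{id}$ on $H^*(\SU(2)^r;\Q)$.

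First I would promote this to equivariant cohomology for the conjugation action. Because $\mu_i$ is $\SU(2)$-equivariant and commutes with the group action, it induces a self-map of the Borel construction $\SU(2)^r\times_{\SU(2)}E\SU(2)$ covering the identity of $B\SU(2)$. In the Serre spectral sequence $E_2^{p,q}=H^p(B\SU(2);\Q)\otimes H^q(\SU(2)^r;\Q)\Rightarrow H^*_{\SU(2)}(\SU(2)^r;\Q)$, the map $\mu_i$ therefore acts trivially on $E_2$ (trivially on the base factor, and trivially on the fiber factor by the previous paragraph), hence trivially on $E_\infty$. Since $\mu_i$ is an involution, over $\Q$ it acts unipotently with trivial associated graded, and a finite-order unipotent operator in characteristic zero is the identity; thus $\mu_i^*=\mathrm{id}$ on $H^*_{\SU(2)}(\SU(2)^r;\Q)$.

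The main obstacle is the descent from equivariant cohomology to the cohomology of the genuine quotient $\mathfrak{X}_r(\SU(2))=\SU(2)^r/\SU(2)$: the conjugation action is far from free (central representations have the whole group as stabilizer), so one does \emph{not} have $H^*(\SU(2)^r/\SU(2);\Q)\cong H^*_{\SU(2)}(\SU(2)^r;\Q)$, and the pullback along $\SU(2)^r\times_{\SU(2)}E\SU(2)\to\mathfrak{X}_r(\SU(2))$ need not be injective. To handle this I would run the argument inside Baird's actual determination of $H^*(\mathfrak{X}_r(\SU(2));\Q)$ in \cite{baird:2008}, which is built from the stratification of $\SU(2)^r$ by conjugation-stabilizer type (central, reducible-noncentral, and irreducible loci), each stratum carrying a visible $(\Z/2\Z)^r$-action compatible with the equivariant machinery. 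On the irreducible stratum the action of $\SO(3)=\SU(2)/Z$, where $Z=\{\pm I\}$, is free, so its contribution is the ordinary cohomology of an honest quotient and the $\mu_i$ are genuine self-maps there; on the reducible strata the contribution is controlled by the maximal torus $T\cong\U(1)$ modulo the Weyl group, where multiplication by $-I$ is rotation by $\pi$ and again induces the identity rationally. Tracking the (provably trivial) action of each $\mu_i$ through Baird's spectral sequence, stratum by stratum, then yields triviality of the induced action on $H^*(\mathfrak{X}_r(\SU(2));\Q)$.

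Finally, I would record a hands-on cross-check: the invariants $H^*(\mathfrak{X}_r(\SU(2));\Q)^{(\Z/2\Z)^r}$ compute $H^*(\mathfrak{X}_r(\SO(3));\Q)$ via the transfer for the finite quotient (as in \cite{Bredon-transf}), and since each $\mu_i^*$ is an involution, one could alternatively extract a generating set for the ring from Baird's formula \eqref{eq:Poincare poly Baird} and verify directly that every generator is fixed. Either way, the single delicate point is the non-freeness of the conjugation action, which forces the argument to pass through the stratified equivariant computation rather than a clean identification of $H^*(X/G)$ with $H^*_G(X)$.
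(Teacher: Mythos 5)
Your proposal follows essentially the same route as the paper: show the $(\Z/2\Z)^r$-action is trivial on $H^{*}(\SU(2)^{r};\Q)$, promote this to the equivariant cohomology $H^{*}_{\SU(2)}(\SU(2)^{r};\Q)$, and then descend to the honest quotient by exploiting Baird's stratification of $\SU(2)^{r}$ by stabilizer type. The differences in the first two steps are cosmetic: where you compute the degree $(-1)^{3+1}=+1$ of the antipodal map on $S^{3}$ and apply K\"unneth, the paper observes that multiplication by a central element is the restriction of the action of the connected group $\SU(2)^{r}$ and hence homotopic to the identity (an argument that works verbatim for $\SU(n)$); and where you run the Serre spectral sequence of the Borel fibration together with the ``finite-order unipotent operator over $\Q$ is the identity'' trick, the paper invokes the fact that an equivariant map inducing an isomorphism on ordinary cohomology induces one on equivariant cohomology. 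The one place where your write-up is materially less complete is the final descent to $H^{*}(\mathfrak{X}_{r}(\SU(2)))$: you correctly flag the non-freeness of conjugation as the obstacle and correctly locate the fix in the stratification (free $\SO(3)$-action on the irreducible locus, maximal-torus description of the reducible locus $Y$), but the paper pins this down with a specific commutative ladder of long exact sequences for the pair $(G^{r},Y)$, using Baird's isomorphism $H^{*}_{G}(G^{r},Y)\cong H^{*}(G^{r}/G,Y/G)$ and the surjectivity of $H^{*}(G^{r}/G,Y/G)\to H^{*}(G^{r}/G)$ in positive degrees; you would need exactly these inputs (or your unipotence trick applied to the long exact sequence of the pair of quotients, once triviality on $H^{*}(Y/G)$ and on $H^{*}(G^{r}/G,Y/G)$ is in hand) to close the argument. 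As written that last step is a plan rather than a proof, but it is the right plan and would go through.
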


\begin{proof}
Let $\Gamma$ be a finite subgroup of a connected Lie group $G$, and let $G$ act on a space $X$. If we restrict the action to $\Gamma$ acting on $X$, then the induced action of $\Gamma$ on $H^*(X)$ is trivial.  This is because for any element $\gamma \in \Gamma$, the corresponding automorphism of $X$ is homotopic to the identity map (take any path from the identity in $G$ to $\gamma$ to obtain the homotopy).

Therefore, since the action of $(\Z/2\Z)^{r}$ is the restriction of the action of the path-connected group $\SU(2)^r$ acting by multiplication, we conclude that the quotient map induces an isomorphism $$H^*(\SU(2)^r) \cong H^*(\SU(2)^r/(\Z/2\Z)^r).$$

Next, let $X, Y$ be two $G$-spaces and let $h: X \to Y$ be a $G$-equivariant map which induces an isomorphism in cohomology $H^*(X) \cong H^*(Y)$. Then $h$ also induces an isomorphism in equivariant cohomology $H^*_G(X)\cong H^*_G(Y)$; recall that $H^*_G(X):=H^*( EG \times_G X)$.  See \cite{ecnotes} for generalities on equivariant cohomology, and in particular a proof of this fact (Theorem 83, page 52).

Since $\Z/2\Z$ is central (the center of $\SU(2)$ is isomorphic to $\Z/2\Z$), the quotient map with respect to the $(\Z/2\Z)^r$-action on $\SU(2)^r$ is $\SU(2)$-equivariant (with respect to conjugation). It follows then that $$H^*_{\SU(2)}(\SU(2)^r) \cong H^*_{\SU(2)}(\SU(2)^r/(\Z/2\Z)^r)\cong H^*_{\SU(2)}(\SU(2)^r)^{(\Z/2\Z)^r},$$ where the second isomorphism holds for the same reason as \eqref{finite-quot}.  Therefore, the action is trivial on $H^*_{\SU(2)}(\SU(2)^r)$.

Let $G=\SU(2)$, and let $Y$ be the set of those tuples in $G^r$ that lie in a common maximal torus. The same homotopy argument in the previous paragraph shows the action is trivial on $H^*_G(Y)$ since $Y\subset G^r$. Moreover, considering the pair $(G^r,Y)$ and \cite[Prop. 2.19]{Hat}, we likewise conclude that the $(\Z/2\Z)^r$-action is trivial on $H^*_G(G^r,Y)$. We now recall a natural diagram in \cite{baird:2008}; diagram (7.8):
$$\xymatrix{\cdots \ar[r] & H^*_G(G^r,Y)\ar[r]& H^*_G(G^r)\ar[r]  & H^*_G(Y)\ar[r] &\cdots \\
            \cdots \ar[r]&H^*(G^r/G,Y/G)\ar[r]\ar[u]&H^*(G^r/G)\ar[u]\ar[r]&H^*(Y/G)\ar[u]\ar[r]&\cdots
}
$$
We just showed that the $(\Z/2\Z)^r$-action on all spaces in the top row is trivial.  In \cite{baird:2008}, equation (7.4) on page 59 shows that $H^*_G(G^r, Y)\cong H^*(G^r/G,Y/G)$; establishing the action is trivial on $H^*(G^r/G,Y/G)$ since the map $ H^*(G^r/G,Y/G)\to H^*_G(G^r, Y)$ is $(\Z/2\Z)^r$-equivariant.  The maps on the bottom row, $H^*(G^r/G,Y/G)\to H^*(G^r/G)$, are equivariant and surjective in positive degrees (see the proof of Theorem 7.2.4 in \cite{baird:2008}); which then implies the $(\Z/2\Z)^r$-action is trivial on $H^*(G^r/G)$ in positive degrees.  However, in degree 0 the $(\Z/2\Z)^r$-action is also trivial since $G^r/G$ is connected.  Thus, the action is trivial on $H^*(G^r/G)$, as required.
\end{proof}


Thus, $H^{*}(\mathfrak{X}_{r}(\U(2))) \cong H^{*}(\mathfrak{X}_{r}(\SU(2)))\otimes H^*(\mathrm{U}(1)^{r})^{(\Z/2\Z)^{r}}.$  However, the action of $(\Z/2\Z)^{r}$ on $H^{*}(\mathrm{U}(1)^{r})$ is the action of -1 on the circle, which is rotation by 180 degrees.  Rotation by 180 is homotopic to the identity, and thus the action is trivial on cohomology (the first paragraph in the above proof shows that this part generalizes to $\SU(n)$).  In other words, $H^{*}(\mathrm{U}(1)^{r})^{(\Z/2\Z)^{r}}=H^{*}(\mathrm{U}(1)^{r})$, and we conclude that \begin{equation}
H^{*}(\mathfrak{X}_{r}(\U(2)))\cong H^{*}(\mathfrak{X}_{r}(\SU(2)))\otimes H^{*}(\mathrm{U}(1)^{r}).\label{eq:equal cohomo}\end{equation}

\begin{prop}
\label{prop:Poincare pol of U(2)} The Poincar\'{e} polynomial of $\mathfrak{X}_{r}(\U(2))$
is the following: \[
P_{t}(\mathfrak{X}_{r}(\U(2)))=(1+t)^{r+1}-\frac{t(1+t+t^{3}+t^{4})^{r}}{1-t^{4}}+\frac{t^{3}}{2}\left(\frac{(1+t)^{2r}}{1-t^{2}}-\frac{(1-t^{2})^{r}}{1+t^{2}}\right).\]
 \end{prop}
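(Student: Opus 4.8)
The plan is to reduce the proposition entirely to the cohomological splitting already established in \eqref{eq:equal cohomo}. That isomorphism is a tensor product decomposition of graded rational vector spaces, and the Poincar\'{e} polynomial of a tensor product of graded vector spaces is the product of the individual Poincar\'{e} polynomials. Hence the formula will follow at once from
\[
P_{t}(\mathfrak{X}_{r}(\U(2))) = P_{t}(\mathfrak{X}_{r}(\SU(2))) \cdot P_{t}(\mathrm{U}(1)^{r}),
\]
once I substitute the two known factors and simplify. So the only real work is algebraic bookkeeping.

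First I would record the elementary factor. Since $\mathrm{U}(1)\cong S^{1}$ has $P_{t}(S^{1}) = 1+t$, the torus $\mathrm{U}(1)^{r} = (S^{1})^{r}$ has $P_{t}(\mathrm{U}(1)^{r}) = (1+t)^{r}$ by the K\"{u}nneth formula. The other factor is Baird's formula \eqref{eq:Poincare poly Baird}, which I take as given. It then remains to multiply Baird's polynomial by $(1+t)^{r}$ and distribute term by term.

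The distribution proceeds in three pieces, matching the three summands of the target expression. The constant-plus-linear piece $1+t$ becomes $(1+t)^{r+1}$, producing the first summand. For the second summand one invokes the factorization identity $(1+t^{3})(1+t) = 1+t+t^{3}+t^{4}$, so that $\frac{t(1+t^{3})^{r}}{1-t^{4}}\cdot(1+t)^{r} = \frac{t(1+t+t^{3}+t^{4})^{r}}{1-t^{4}}$. For the last summand one distributes $(1+t)^{r}$ into both fractions: the first becomes $\frac{(1+t)^{2r}}{1-t^{2}}$, while the second uses $(1-t)(1+t) = 1-t^{2}$ to yield $\frac{(1-t^{2})^{r}}{1+t^{2}}$. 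Collecting these three pieces gives exactly the stated formula.

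I do not anticipate any serious obstacle: the entire argument is multiplicativity of Poincar\'{e} polynomials under the tensor splitting \eqref{eq:equal cohomo}, followed by routine polynomial algebra. The only mildly delicate points are the two factorization identities $(1+t^{3})(1+t) = 1+t+t^{3}+t^{4}$ and $(1-t)(1+t) = 1-t^{2}$, which are precisely what collapse the products of binomial powers into the compact closed forms appearing in the statement.
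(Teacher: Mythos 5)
Your proposal is correct and is exactly the paper's argument: the proof cited in the text is precisely the multiplicativity of Poincar\'{e} polynomials under the splitting \eqref{eq:equal cohomo}, combined with Baird's formula \eqref{eq:Poincare poly Baird} and $P_{t}(S^{1})=1+t$. The algebraic simplifications you record, including $(1+t^{3})(1+t)=1+t+t^{3}+t^{4}$ and $(1-t)(1+t)=1-t^{2}$, are the correct bookkeeping and match the stated formula.
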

\begin{proof}
This follows from the isomorphism \eqref{eq:equal cohomo}, from \eqref{eq:Poincare poly Baird} and the fact that the Poincar\'e polynomial of a circle is $1+t$.
\end{proof}

Now, take $G=\U(p,q)$, the group of automorphisms of $\C^{p+q}$ preserving
a nondegenerate hermitian form with signature $(p,q)$. In matrix
terms, one can write \[
\U(p,q)=\{M\in\GL(p+q,\C)\,|\, M^{*}I_{p,q}M=I_{p,q}\}\]
where
\[I_{p,q}=\left(\begin{array}{ccc}
I_{p} & 0\\
0 & -I_{q}\end{array}\right).\]
Its maximal compact is $K=\U(p)\times\U(q)$ and it embeds diagonally
in $\U(p,q)$: \[
(M,N)\hookrightarrow\left(\begin{array}{cc}
M & 0\\
0 & N\end{array}\right).\]
It follows from this that, as a subspace of $\mathfrak{X}_{r}(\U(p,q))$,
$\mathfrak{X}_{r}(\U(p)\times\U(q))$ is homeomorphic to $\mathfrak{X}_{r}(\U(p))\times\mathfrak{X}_{r}(\U(q))$.

From Theorem \ref{thm:maintheorem} and from Proposition
\ref{prop:Poincare pol of U(2)}, we have the following:
\begin{prop}
For any $p,q\geq1$ and any $r\geq1$, there exists a strong deformation
retraction from $\mathfrak{X}_{r}(\U(p,q))$ onto $\mathfrak{X}_{r}(\U(p))\times\mathfrak{X}_{r}(\U(q))$.
In particular, the Poincar\'{e} polynomials of $\mathfrak{X}_{r}(\U(2,1))$
and $\mathfrak{X}_{r}(\U(2,2))$ are given respectively by: \[
P_{t}(\mathfrak{X}_{r}(\U(2,1)))=P_{t}(\mathfrak{X}_{r}(\U(2)))(1+t)^r\]
 and \[
P_{t}(\mathfrak{X}_{r}(\U(2,2)))=P_{t}(\mathfrak{X}_{r}(\U(2)))^2.\]
\end{prop}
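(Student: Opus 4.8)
The plan is to obtain the deformation retraction as a direct application of Theorem \ref{thm:maintheorem}, and then read off the Poincar\'e polynomials via the K\"unneth formula. First I would verify that $G=\U(p,q)$ meets the hypotheses of Definition \ref{def:condforG}: it is a closed real algebraic subgroup of $\GL(p+q,\C)$ cut out by the polynomial equations $M^{*}I_{p,q}M=I_{p,q}$, its complexification is the reductive group $\mathbf{G}=\GL(p+q,\C)$, and it is connected, so by Remark \ref{rem:1}(2) it is automatically Zariski dense in $\mathbf{G}$. Since $\U(p,q)$ is in particular a real algebraic set, Theorem \ref{thm:maintheorem} applies and yields a strong deformation retraction from $\mathfrak{X}_{r}(\U(p,q))$ onto $\mathfrak{X}_{r}(K)$, where $K=\U(p)\times\U(q)$ is the maximal compact subgroup.

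Next I would identify the target. As already observed before the statement, the block-diagonal embedding $K=\U(p)\times\U(q)\hookrightarrow\U(p,q)$ induces a homeomorphism $\mathfrak{X}_{r}(\U(p)\times\U(q))\cong\mathfrak{X}_{r}(\U(p))\times\mathfrak{X}_{r}(\U(q))$; this is the general fact that for a product of compact groups $\hom(F_{r},K_{1}\times K_{2})/(K_{1}\times K_{2})\cong(\hom(F_{r},K_{1})/K_{1})\times(\hom(F_{r},K_{2})/K_{2})$, since the conjugation action splits as a product. Composing the retraction from the previous paragraph with this homeomorphism gives the desired strong deformation retraction onto $\mathfrak{X}_{r}(\U(p))\times\mathfrak{X}_{r}(\U(q))$.

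For the Poincar\'e polynomials I would pass to rational cohomology and apply the K\"unneth theorem, so that $P_{t}$ of a product is the product of the factors' polynomials. The only new input needed is $\mathfrak{X}_{r}(\U(1))$: since $\U(1)$ is abelian the conjugation action is trivial, hence $\mathfrak{X}_{r}(\U(1))\cong\U(1)^{r}$ is an $r$-torus with $P_{t}=(1+t)^{r}$. Combining this with Proposition \ref{prop:Poincare pol of U(2)} gives $P_{t}(\mathfrak{X}_{r}(\U(2,1)))=P_{t}(\mathfrak{X}_{r}(\U(2)))(1+t)^{r}$ in the case $(p,q)=(2,1)$, where $K=\U(2)\times\U(1)$, and $P_{t}(\mathfrak{X}_{r}(\U(2,2)))=P_{t}(\mathfrak{X}_{r}(\U(2)))^{2}$ in the case $(p,q)=(2,2)$, where $K=\U(2)\times\U(2)$.

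Since every ingredient is already in place, there is no serious obstacle; the proof is essentially an assembly of results proved above. The one point demanding a little care is checking that $\U(p,q)$ genuinely falls under Definition \ref{def:condforG}---in particular its connectedness, which is what guarantees both the Zariski density and the applicability of the algebraic hypothesis of Theorem \ref{thm:maintheorem}---together with the (routine) observation that a strong deformation retraction composed with a homeomorphism of the target remains a strong deformation retraction onto the homeomorphic image.
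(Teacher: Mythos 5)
Your argument is correct and follows the same route as the paper, which likewise obtains the retraction by applying Theorem \ref{thm:maintheorem} to $G=\U(p,q)$ with maximal compact $K=\U(p)\times\U(q)$, uses the product splitting $\mathfrak{X}_{r}(\U(p)\times\U(q))\cong\mathfrak{X}_{r}(\U(p))\times\mathfrak{X}_{r}(\U(q))$ noted just before the statement, and then reads off the Poincar\'e polynomials from Proposition \ref{prop:Poincare pol of U(2)} together with $\mathfrak{X}_{r}(\U(1))\cong\U(1)^{r}$ and the K\"unneth formula. Your explicit verification that $\U(p,q)$ satisfies Definition \ref{def:condforG} (connectedness, algebraicity, Zariski density) is a detail the paper leaves implicit, but it is accurate and only makes the argument more complete.
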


Exactly in the same way, since $\U(2)$ is a maximal compact subgroup
of $\Sp(4,\R)$ and of $\GL(2,\C)$, we have the following:
\begin{prop}
For any $r\geq1$, there exists a strong deformation retraction from
$\mathfrak{X}_{r}(\Sp(4,\R))$ and from $\mathfrak{X}_{r}(\GL(2,\C))$ onto $\mathfrak{X}_{r}(\U(2))$. In
particular, the Poincar\'{e} polynomials of $\mathfrak{X}_{r}(\Sp(4,\R))$ and $\mathfrak{X}_{r}(\GL(2,\C))$ are such that: \[
P_{t}(\mathfrak{X}_{r}(\Sp(4,\R)))=P_{t}(\mathfrak{X}_{r}(\GL(2,\C)))=P_{t}(\mathfrak{X}_{r}(\U(2))).\]

\end{prop}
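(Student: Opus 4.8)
The plan is to invoke Theorem~\ref{thm:maintheorem} directly, exactly as in the proof of the preceding proposition for $\U(p,q)$, once I verify that both $\Sp(4,\R)$ and $\GL(2,\C)$ fall under the hypotheses of Definition~\ref{def:condforG} with the \emph{same} maximal compact subgroup $K=\U(2)$, and that both are algebraic.

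First I would record the identification of maximal compacts. For $\Sp(4,\R)=\Sp(2\cdot 2,\R)$ this is the standard fact that a maximal compact subgroup of $\Sp(2n,\R)$ is $\U(n)$; for $\GL(2,\C)$, regarded as a real Lie group, the polar (Cartan) decomposition gives $\U(2)$ as a maximal compact subgroup. Next I would check the remaining conditions. The group $\Sp(4,\R)$ is the set of $\R$-points of the complex reductive (indeed semisimple) algebraic group $\Sp(4,\C)$, which is defined over $\R$; since $\Sp(4,\R)$ is connected, Remark~\ref{rem:1} guarantees that it coincides with $\mathbf{G}(\R)$, is Zariski dense, and is a real algebraic group. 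For $\GL(2,\C)$ I would use the observation already made in Section~\ref{sec:real-char}, that any complex reductive Lie group may be viewed as a real reductive Lie group in the standard way; concretely $\GL(2,\C)$ is itself a connected real algebraic reductive group, so it again satisfies Definition~\ref{def:condforG}. In both cases the group is algebraic, which is precisely what Theorem~\ref{thm:maintheorem} requires.

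With these verifications in place, the conclusion is immediate: Theorem~\ref{thm:maintheorem} yields strong deformation retractions
\[
\mathfrak{X}_{r}(\Sp(4,\R))\longrightarrow \mathfrak{X}_{r}(\U(2))
\qquad\text{and}\qquad
\mathfrak{X}_{r}(\GL(2,\C))\longrightarrow \mathfrak{X}_{r}(\U(2)),
\]
proving the first assertion. A strong deformation retraction is in particular a homotopy equivalence, hence induces isomorphisms on (rational) cohomology; therefore $P_{t}(\mathfrak{X}_{r}(\Sp(4,\R)))=P_{t}(\mathfrak{X}_{r}(\U(2)))$ and $P_{t}(\mathfrak{X}_{r}(\GL(2,\C)))=P_{t}(\mathfrak{X}_{r}(\U(2)))$, giving the stated chain of equalities. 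The explicit value of $P_{t}(\mathfrak{X}_{r}(\U(2)))$ is then supplied by Proposition~\ref{prop:Poincare pol of U(2)}.

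The only point requiring any care — rather than being a genuine obstacle — is confirming that $\GL(2,\C)$, as a complex group bundled here alongside the split-type real group $\Sp(4,\R)$, genuinely satisfies the real $K$-reductive hypotheses with maximal compact $\U(2)$; once the framework of Section~\ref{sec:real-char} is applied this is routine, and all the substantive work has already been carried out in Theorem~\ref{thm:maintheorem}.
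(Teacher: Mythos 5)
Your proposal is correct and follows essentially the same route as the paper: the paper simply remarks that the result holds ``exactly in the same way'' as the $\U(p,q)$ case, i.e.\ by applying Theorem~\ref{thm:maintheorem} once one knows $\U(2)$ is the maximal compact of both $\Sp(4,\R)$ and $\GL(2,\C)$, and then reading off the Poincar\'e polynomial from Proposition~\ref{prop:Poincare pol of U(2)}. Your additional verification that both groups satisfy Definition~\ref{def:condforG} and are algebraic is accurate and merely makes explicit what the paper leaves implicit.
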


\subsection{Low rank orthogonal groups}
\begin{prop}
$\mathfrak{X}_{r}(\SU(2))/(\Z/2\Z)^{r}\cong \mathfrak{X}_{r}(\SO(3))$
\end{prop}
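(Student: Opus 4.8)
The plan is to exploit the classical double cover $\pi\colon\SU(2)\to\SO(3)$, a surjective Lie group homomorphism whose kernel is the center $\{\pm I\}\cong\Z/2\Z$, so that $\SO(3)\cong\SU(2)/\{\pm I\}$. Taking $r$-fold products gives a surjective homomorphism $\pi^{r}\colon\SU(2)^{r}\to\SO(3)^{r}$ with kernel $(\Z/2\Z)^{r}$, where this kernel acts on $\SU(2)^{r}$ by multiplication by $\pm I$ in each coordinate. Since $\pi$ is a homomorphism, $\pi^{r}$ intertwines the conjugation action of $\SU(2)$ on the source with that of $\SO(3)$ on the target; and since the kernel is central, the $(\Z/2\Z)^{r}$-action commutes with conjugation and therefore descends to an action on $\mathfrak{X}_{r}(\SU(2))=\SU(2)^{r}/\SU(2)$.

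First I would pass to orbit spaces. Because $\pi$ is surjective, conjugation by all of $\SO(3)$ corresponds under $\pi^{r}$ to conjugation by all of $\SU(2)$, so $\pi^{r}$ induces a continuous surjection $\bar{\pi}\colon\mathfrak{X}_{r}(\SU(2))\to\mathfrak{X}_{r}(\SO(3))$. As $\bar{\pi}$ is plainly constant on $(\Z/2\Z)^{r}$-orbits, it factors through a continuous surjection
\[
\Phi\colon\mathfrak{X}_{r}(\SU(2))/(\Z/2\Z)^{r}\longrightarrow\mathfrak{X}_{r}(\SO(3)).
\]

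The heart of the argument, and the step I expect to be the main obstacle, is proving that $\Phi$ is injective. Suppose $(B_{1},\dots,B_{r})$ and $(C_{1},\dots,C_{r})$ in $\SU(2)^{r}$ have the same image in $\mathfrak{X}_{r}(\SO(3))$, that is, $\pi^{r}(C)=g\cdot\pi^{r}(B)$ for some $g\in\SO(3)$. Using surjectivity of $\pi$, I would lift $g=\pi(h)$ with $h\in\SU(2)$; then $\pi(C_{i})=\pi(hB_{i}h^{-1})$ for each $i$, so $C_{i}=\epsilon_{i}\,hB_{i}h^{-1}$ with $\epsilon_{i}\in\{\pm I\}$, where crucially the signs $\epsilon_{i}$ may vary independently with $i$. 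This says exactly that $(C_{1},\dots,C_{r})$ lies in the combined orbit of $(B_{1},\dots,B_{r})$ under $\SU(2)$-conjugation together with the $(\Z/2\Z)^{r}$-action, which is precisely what is needed for $\Phi$ to be injective. The delicate point is to confirm that these independent central signs are captured exactly by the $(\Z/2\Z)^{r}$-factor and introduce no further identifications beyond those already quotiented out.

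Finally I would upgrade $\Phi$ from a continuous bijection to a homeomorphism. All the spaces involved are compact and Hausdorff: $\mathfrak{X}_{r}(\SU(2))$ is the quotient of the compact group $\SU(2)^{r}$ by a compact group action, its further quotient by the finite group $(\Z/2\Z)^{r}$ remains compact Hausdorff, and $\mathfrak{X}_{r}(\SO(3))$ is likewise compact Hausdorff. A continuous bijection from a compact space to a Hausdorff space is automatically a homeomorphism, which completes the proof.
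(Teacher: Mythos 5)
Your proposal is correct and follows essentially the same route as the paper: both arguments rest on the double cover $\SU(2)\to\SO(3)$ having central kernel $\{\pm I\}$, so that the induced $(\Z/2\Z)^{r}$-action on $\SU(2)^{r}$ commutes with conjugation and the two quotients can be interchanged. The paper states this more compactly by identifying $(\SU(2)^{r}/\SO(3))/(\Z/2\Z)^{r}$ with $\SO(3)^{r}/\SO(3)$ directly, whereas you verify the resulting bijection explicitly and invoke compactness to upgrade it to a homeomorphism; both are sound.
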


\begin{proof}
$\SU(2)\to\SO(3)$ is the universal cover of $\SO(3)$ with fiber $\Z/2\Z\cong\pi_{1}(\SO(3))$.  The deck group is given by multiplication by minus the identity matrix.  This induces a $(\Z/2\Z)^{r}$-cover $\SU(2)^{r}\to\SO(3)^{r}$.  The corresponding $(\Z/2\Z)^{r}$-action is equivariant with respect to the conjugation action of $\SO(3)$ since $\Z/2\Z$ is acting by multiplication by central elements in each factor.  Therefore, $$(\SU(2)^{r}/\SO(3))/(\Z/2\Z)^{r}\cong \mathfrak{X}_{r}(\SO(3)),$$ where $\SO(3)\cong\mathrm{PSU}(2)$
acts diagonally by conjugation on $\SU(2)^{r}$. However, since $\mathrm{PSU}(2)\cong\SU(2)/Z(\SU(2))$, it is clear that $\SU(2)^{r}/\SO(3)\cong\mathfrak{X}_{r}(\SU(2))$.
\end{proof}

From this result we conclude that the cohomology of $\mathfrak{X}_{r}(\SO(3))$
is the $(\Z/2\Z)^{r}$-invariant part of the cohomology of $\mathfrak{X}_{r}(\SU(2))$:
\begin{equation}
H^{*}(\mathfrak{X}_{r}(\SO(3)))\cong H^{*}(\mathfrak{X}_{r}(\SU(2)))^{(\Z/2\Z)^{r}}.\label{eq:invariant cohomology of SU(2)}\end{equation}

\begin{prop}
\label{prop:Poincare pol of SO(3)} The Poincar\'{e} polynomials of $\mathfrak{X}_{r}(\SO(3))$
and of $\mathfrak{X}_{r}(\Or(3))$ are the following: \[
P_{t}(\mathfrak{X}_{r}(\SO(3)))=P_{t}(\mathfrak{X}_{r}(\SU(2)))\]
 and \[
P_{t}(\mathfrak{X}_{r}(\Or(3)))=2^rP_{t}(\mathfrak{X}_{r}(\SU(2))).\]

\end{prop}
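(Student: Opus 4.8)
The plan is to obtain both Poincar\'{e} polynomial identities as essentially formal consequences of results already established, with almost no new computation. For the $\SO(3)$ case, I would start from the isomorphism \eqref{eq:invariant cohomology of SU(2)}, which identifies $H^{*}(\mathfrak{X}_{r}(\SO(3)))$ with the $(\Z/2\Z)^{r}$-invariant part of $H^{*}(\mathfrak{X}_{r}(\SU(2)))$. The decisive input is Theorem \ref{trivialaction}, which asserts precisely that this $(\Z/2\Z)^{r}$-action is trivial. Since the invariant subspace of a trivial action is the entire module, \eqref{eq:invariant cohomology of SU(2)} upgrades to a full isomorphism $H^{*}(\mathfrak{X}_{r}(\SO(3)))\cong H^{*}(\mathfrak{X}_{r}(\SU(2)))$, and passing to Poincar\'{e} polynomials yields $P_{t}(\mathfrak{X}_{r}(\SO(3)))=P_{t}(\mathfrak{X}_{r}(\SU(2)))$.

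For the $\Or(3)$ case, since $3$ is odd I would invoke the second item of Proposition \ref{prop:U(n)SU(n)}, giving the homeomorphism $\mathfrak{X}_{r}(\Or(3))\cong\mathfrak{X}_{r}(\SO(3))\times(\Z/2\Z)^{r}$. The factor $(\Z/2\Z)^{r}$ is a finite discrete space of $2^{r}$ points, so the product is simply a disjoint union of $2^{r}$ copies of $\mathfrak{X}_{r}(\SO(3))$. Because rational cohomology sends disjoint unions to direct sums, every Betti number is multiplied by $2^{r}$, so $P_{t}(\mathfrak{X}_{r}(\Or(3)))=2^{r}P_{t}(\mathfrak{X}_{r}(\SO(3)))$. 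Substituting the identity from the first case then gives $P_{t}(\mathfrak{X}_{r}(\Or(3)))=2^{r}P_{t}(\mathfrak{X}_{r}(\SU(2)))$, as claimed.

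I do not anticipate any genuine obstacle: all the substantive content is already carried by Theorem \ref{trivialaction} (the triviality of the $(\Z/2\Z)^{r}$-action, which itself rests on Baird's computation and the equivariant-cohomology diagram reproduced above). The remaining moves are purely formal: reading off the invariant part under a trivial action, and the elementary observation that appending a $0$-dimensional factor with $2^{r}$ components scales the Poincar\'{e} polynomial by $2^{r}$. The only point worth articulating with a little care is that the $(\Z/2\Z)^{r}$ appearing in Proposition \ref{prop:U(n)SU(n)} enters as a genuine disjoint union of connected components rather than as a nontrivial quotient or fibration; but this is immediate from the direct-product form of the homeomorphism, so the argument reduces to two short deductions.
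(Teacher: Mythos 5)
Your argument is correct and follows exactly the same route as the paper: the $\SO(3)$ identity comes from combining Theorem \ref{trivialaction} with the isomorphism \eqref{eq:invariant cohomology of SU(2)}, and the $\Or(3)$ identity comes from the second item of Proposition \ref{prop:U(n)SU(n)} together with the observation that the discrete factor $(\Z/2\Z)^{r}$ multiplies the Poincar\'{e} polynomial by $2^{r}$. You have merely spelled out the details that the paper leaves implicit.
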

\proof

The formula for $P_{t}(\mathfrak{X}_{r}(\SO(3)))$ follows from Theorem \ref{trivialaction}, and isomorphism
\eqref{eq:invariant cohomology of SU(2)}.
The formula for $P_{t}(\mathfrak{X}_{r}(\Or(3)))$ is immediate from
the one of $P_{t}(\mathfrak{X}_{r}(\SO(3)))$ and from Proposition \ref{prop:U(n)SU(n)}.
\endproof

Take $G=\SO(p,q)$, the group of volume preserving
automorphisms of $\R^{p+q}$ preserving a nondegenerate symmetric
bilinear form with signature $(p,q)$. In matrix terms, one can write
\[
\SO(p,q)=\{M\in\SL(p+q,\R)\,|\, M^{t}I_{p,q}M=I_{p,q}\}\]
 where \[
I_{p,q}=\left(\begin{array}{ccc}
-I_{p} & 0\\
0 & I_{q}\end{array}\right).\]
 If $p+q\geq3$, $\SO(p,q)$ has two connected components. Denote
by $\SO_{0}(p,q)$ the component of the identity.

The maximal compact subgroup of $\SO_{0}(p,q)$ is $K=\SO(p)\times\SO(q)$
and it embeds diagonally in $\SO(p,q)$. So, as in the case of $\U(p,q)$
mentioned above, it follows that, as a subspace of $\mathfrak{X}_{r}(\SO_{0}(p,q))$,
$\mathfrak{X}_{r}(\SO(p)\times\SO(q))$ is homeomorphic to $\mathfrak{X}_{r}(\SO(p))\times\mathfrak{X}_{r}(\SO(q))$.

From Theorem \ref{thm:maintheorem} and Proposition \ref{prop:Poincare pol of SO(3)}, we have thus the following:
\begin{prop}
For any $p,q\geq1$ and any $r\geq1$, there exists a strong deformation
retraction from $\mathfrak{X}_{r}(\SO_{0}(p,q))$ onto $\mathfrak{X}_{r}(\SO(p))\times\mathfrak{X}_{r}(\SO(q))$.
 In particular, the Poincar\'{e} polynomials of $\mathfrak{X}_{r}(\SO_{0}(2,3))$
and of $\mathfrak{X}_{r}(\SO_{0}(3,3))$ are given respectively by
\[
P_{t}(\mathfrak{X}_{r}(\SO_{0}(2,3)))=P_{t}(\mathfrak{X}_{r}(\SU(2)))(1+t)^r\]
 and \[
P_{t}(\mathfrak{X}_{r}(\SO_{0}(3,3)))=P_{t}(\mathfrak{X}_{r}(\SU(2)))^2.\]
\end{prop}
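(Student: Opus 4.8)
The plan is to run the same argument as in the preceding $\U(p,q)$ case, now feeding the structure theory of $\SO_{0}(p,q)$ into Theorem \ref{thm:maintheorem}. First I would record that the maximal compact subgroup of $G=\SO_{0}(p,q)$ is $K=\SO(p)\times\SO(q)$, block-diagonally embedded, and check that $\SO_{0}(p,q)$ satisfies Definition \ref{def:condforG}: it is the identity component $\mathbf{G}(\R)_{0}$ of the $\R$-points of the complex reductive group $\mathbf{G}=\SO(p+q,\C)$, which is defined over $\R$, and it is Zariski dense in $\mathbf{G}$ since $\mathbf{G}$ is Zariski connected. Thus $\SO_{0}(p,q)$ is a real $K$-reductive Lie group, and Theorem \ref{thm:maintheorem} yields a strong deformation retraction of $\mathfrak{X}_{r}(\SO_{0}(p,q))$ onto $\mathfrak{X}_{r}(K)=\mathfrak{X}_{r}(\SO(p)\times\SO(q))$.

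Next I would invoke the homeomorphism $\mathfrak{X}_{r}(\SO(p)\times\SO(q))\cong\mathfrak{X}_{r}(\SO(p))\times\mathfrak{X}_{r}(\SO(q))$ recorded just before the statement (it comes from $\hom(F_{r},A\times B)=\hom(F_{r},A)\times\hom(F_{r},B)$ together with the factorwise conjugation action). This identifies the target of the retraction, and by the K\"{u}nneth theorem it splits the Poincar\'{e} polynomial as a product. For the two named cases I would then substitute the known factors. For $\SO_{0}(2,3)$, where $K=\SO(2)\times\SO(3)$, I use $\SO(2)\cong\U(1)$, so $\mathfrak{X}_{r}(\SO(2))\cong\U(1)^{r}$ has Poincar\'{e} polynomial $(1+t)^{r}$, combined with $P_{t}(\mathfrak{X}_{r}(\SO(3)))=P_{t}(\mathfrak{X}_{r}(\SU(2)))$ from Proposition \ref{prop:Poincare pol of SO(3)}; this gives the first displayed formula. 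For $\SO_{0}(3,3)$, where $K=\SO(3)\times\SO(3)$, squaring the $\SO(3)$ factor gives the second.

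The one genuinely delicate point, and the step I expect to be the main obstacle, is the algebraicity hypothesis in Theorem \ref{thm:maintheorem}. Unlike $\U(p,q)$, which is connected and equals the full real points $\mathbf{G}(\R)$, the group $\SO_{0}(p,q)$ is only the identity component of $\SO(p,q)$ and is \emph{not} Zariski closed: for $p,q\geq1$ the two Euclidean components of $\SO(p,q)$ are separated by a semi-algebraic inequality (for instance the sign of the determinant of the upper-left $p\times p$ block), so $\SO_{0}(p,q)$ is Zariski dense in $\mathbf{G}$ rather than algebraic. The homotopy equivalence of Theorem \ref{main} applies verbatim, since it needs only Definition \ref{def:condforG}; the issue is the upgrade to a \emph{strong} deformation retraction, which in Theorem \ref{thm:maintheorem} and Lemma \ref{subcomplex} used algebraicity of $G$ to put a CW structure on $\mathfrak{X}_{r}(G)$ for the Whitehead argument. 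I would resolve this by observing that algebraicity enters only through semi-algebraicity of the quotient: $\SO_{0}(p,q)^{r}$ and the Kempf-Ness set of Proposition \ref{pro:general-KN} are cut out by polynomial equations together with the component-selecting inequalities, hence are semi-algebraic, and by the homeomorphism $\eta$ of Theorem \ref{thm:RS} the quotient $\mathcal{KN}_{G}^{Y}/K\cong\mathfrak{X}_{r}(\SO_{0}(p,q))$ is then semi-algebraic as well, being the quotient of a semi-algebraic set by the compact group $K$ acting semi-algebraically. Since semi-algebraic sets are triangulable (the very fact used through \cite{BCR98} in Lemma \ref{subcomplex}), the CW/Whitehead argument still applies, and the strong deformation retraction survives for the identity component.
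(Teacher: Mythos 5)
Your proof is correct and follows essentially the same route as the paper: the proposition is obtained by feeding $K=\SO(p)\times\SO(q)$ into Theorem \ref{thm:maintheorem}, identifying $\mathfrak{X}_{r}(\SO(p)\times\SO(q))\cong\mathfrak{X}_{r}(\SO(p))\times\mathfrak{X}_{r}(\SO(q))$, and substituting the known Poincar\'{e} polynomials of $\mathfrak{X}_{r}(\SO(2))\cong (S^{1})^{r}$ and of $\mathfrak{X}_{r}(\SO(3))$ via K\"{u}nneth. The one place you go beyond the paper is the algebraicity caveat, and it is a legitimate one: the paper applies Theorem \ref{thm:maintheorem} to $\SO_{0}(p,q)$ without comment, even though Lemma \ref{subcomplex} assumes $G$ is a real algebraic set, while $\SO_{0}(p,q)$ --- being Zariski dense in $\SO(p+q,\C)$ yet only one of the two Euclidean components of $\SO(p,q)$ --- is merely semi-algebraic; your observation that semi-algebraicity (hence triangulability, with $\mathfrak{X}_{r}(K)$ as a closed subcomplex) of $G^{r}$, of the Kempf--Ness set, and of the quotient is all the Whitehead argument actually requires is a correct and welcome repair of this small gap.
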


In the same way, since $\SO(3)$ (resp. $\Or(3)$) is a maximal compact subgroup
of both $\SL(3,\R)$ (resp. $\GL(3,\R)$) and $\SO(3,\C)$ (resp. $\Or(3,\C)$), we have the following:
\begin{prop}
For any $r\geq1$, there exists a strong deformation retraction from
$\mathfrak{X}_{r}(\SL(3,\R))$ and $\mathfrak{X}_{r}(\SO(3,\C))$ onto $\mathfrak{X}_{r}(\SO(3))$ and
from $\mathfrak{X}_{r}(\GL(3,\R))$ and $\mathfrak{X}_{r}(\Or(3,\C))$ onto $\mathfrak{X}_{r}(\Or(3))$.
In particular, the Poincar\'{e} polynomials of $\mathfrak{X}_{r}(\SL(3,\R))$ and $\mathfrak{X}_{r}(\SO(3,\C))$ are equal and given by:
\[P_t(\mathfrak{X}_{r}(\SL(3,\R)))=P_{t}(\mathfrak{X}_{r}(\SO(3,\C)))=P_{t}(\mathfrak{X}_{r}(\SU(2))).\]
Similarly, the Poincar\'{e} polynomials of $\mathfrak{X}_{r}(\GL(3,\R))$ and $\mathfrak{X}_{r}(\Or(3,\C))$ are equal and given by:
\[P_t(\mathfrak{X}_{r}(\GL(3,\R)))=P_{t}(\mathfrak{X}_{r}(\Or(3,\C)))=2^rP_{t}(\mathfrak{X}_{r}(\SU(2))).\]
\end{prop}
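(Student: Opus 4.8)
The plan is to follow the same template as the preceding propositions on $\U(p,q)$ and $\SO_0(p,q)$, reducing the statement to a direct application of Theorem \ref{thm:maintheorem} together with Proposition \ref{prop:Poincare pol of SO(3)}. First I would fix, for each of the four groups, its maximal compact subgroup: for $\SL(3,\R)$ and for $\SO(3,\C)$ (the latter regarded as a real reductive Lie group in the usual way) the maximal compact is $\SO(3)$, while for $\GL(3,\R)$ and for $\Or(3,\C)$ it is $\Or(3)$. I would then check that all four groups satisfy the hypotheses of Definition \ref{def:condforG} and, crucially, are real algebraic sets: $\SL(3,\R)$ and $\GL(3,\R)$ are the $\R$-points of $\SL(3,\C)$ and $\GL(3,\C)$, while $\SO(3,\C)$ and $\Or(3,\C)$ are complex algebraic groups, hence real algebraic when viewed as real groups. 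This verifies the algebraicity assumption required by the strong deformation retraction version of our main theorem.

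With these identifications in place, Theorem \ref{thm:maintheorem} applies verbatim and produces the asserted strong deformation retractions: $\mathfrak{X}_r(\SL(3,\R))$ and $\mathfrak{X}_r(\SO(3,\C))$ retract onto $\mathfrak{X}_r(\SO(3))$, and $\mathfrak{X}_r(\GL(3,\R))$ and $\mathfrak{X}_r(\Or(3,\C))$ retract onto $\mathfrak{X}_r(\Or(3))$. Since a strong deformation retraction is in particular a homotopy equivalence, it induces an isomorphism on rational cohomology, so the Poincar\'e polynomials of $\mathfrak{X}_r(G)$ and $\mathfrak{X}_r(K)$ coincide in each case. Substituting the two values $P_t(\mathfrak{X}_r(\SO(3)))=P_t(\mathfrak{X}_r(\SU(2)))$ and $P_t(\mathfrak{X}_r(\Or(3)))=2^r P_t(\mathfrak{X}_r(\SU(2)))$ supplied by Proposition \ref{prop:Poincare pol of SO(3)} then yields the two displayed formulas.

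The only genuine content is the identification of the maximal compact subgroups, and this is the step I would treat most carefully, especially for the complex groups. For $\SO(3,\C)\cong\PSL(2,\C)$ the maximal compact is $\mathrm{PSU}(2)\cong\SO(3)$; and since $3$ is odd, $\Or(3,\C)\cong\SO(3,\C)\times\Z/2\Z$ has maximal compact $\SO(3)\times\Z/2\Z\cong\Or(3)$, consistently with Proposition \ref{prop:U(n)SU(n)}. One must also confirm the standard facts that $\SO(3)$ is maximal compact in $\SL(3,\R)$ and that $\Or(3)$ is maximal compact in $\GL(3,\R)$. Once these structural facts are confirmed, no further computation is required and the proof is immediate.
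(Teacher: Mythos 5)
Your proposal is correct and follows essentially the same route as the paper: identify the maximal compact subgroups ($\SO(3)$ for $\SL(3,\R)$ and $\SO(3,\C)$, $\Or(3)$ for $\GL(3,\R)$ and $\Or(3,\C)$), invoke Theorem \ref{thm:maintheorem} for the strong deformation retractions, and substitute the Poincar\'e polynomials from Proposition \ref{prop:Poincare pol of SO(3)}. The extra care you take in verifying algebraicity and the identifications $\SO(3,\C)\cong\PSL(2,\C)$, $\Or(3,\C)\cong\SO(3,\C)\times\Z/2\Z$ is sound and only makes explicit what the paper leaves implicit.
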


\section{Comparing Real and Complex character varieties}

In this section, we slightly change the perspective. Instead of comparing
the topologies of $K$- and $G$- character varieties, we present some
results on the relation between the topology and \emph{geometry} of
the character varieties $\mathfrak{X}_{r}(G)$ and (the real points
of) $\mathfrak{X}_{r}(\mathbf{G})$, making explicit use of \emph{trace
coordinates}. These coordinates have been previously considered in
the literature, and serve to embed $\mathfrak{X}_{r}(\mathbf{G})$
in complex vector spaces. Then, we provide a detailed analysis of
some examples (real forms $G$ of $\mathbf{G}=\SL(2,\mathbb{C})$),
showing how the geometry of these spaces compare, and how to understand
the deformation retraction of the previous section in these coordinates.
We also briefly describe the Kempf-Ness sets for some of these examples.

Consider a generating set of $\mathbf{G}$-invariant polynomials in
$\C[\mathfrak{R}_{r}(\mathbf{G})]^{\bG}$. Because these polynomials
distinguish orbits of the $\mathbf{G}$-action, this defines an embedding,
denoted $\phi$, of the $\mathbf{G}$-character variety $\mathfrak{X}_{r}(\mathbf{G})$
in a vector space $V:=\mathbb{C}^{N}$, given by sending an orbit
to all the values it takes on this generating set. The embedding realizes
$\mathfrak{X}_{r}(\mathbf{G})$ as a complex affine subvariety in
$V$, and the Euclidean topology mentioned above coincides with the
subspace topology induced from $V$ (see \cite[Section 2.3.3]{Florentino-Lawton:2009}).
Since $G\subset\mathbf{G}$ and $\mathfrak{R}_{r}(G)\subset\mathfrak{R}_{r}(\mathbf{G})$
we can try to use the generating set of invariants to relate $\mathfrak{X}_{r}(G)$
with the real points of $\mathfrak{X}_{r}(\mathbf{G})$, \[
\mathfrak{X}_{r}(\mathbf{G})(\mathbb{R})=\mathfrak{X}_{r}(\mathbf{G})\cap V(\mathbb{R}),\]
as follows. Since \[
\C[\mathfrak{R}_{r}(\mathbf{G})]^{\bG}\cong\R[\mathfrak{R}_{r}(\bG(\R))]^{\bG(\R)}\otimes_{\R}\C,\]
there exists a generating set for $\R[\mathfrak{R}_{r}(\bG(\R))]^{\bG(\R)}$,
which equals $\R[\mathfrak{R}_{r}(\bG(\R))]^{G}$ by density of $G$,
that extends (by scalars) to one for $\C[\mathfrak{R}_{r}(\mathbf{G})]^{\bG}$.
Thus, with respect to such a generating set, the real points $\X_{r}(\bG)(\R)$
are a well-defined real algebraic subset of $V(\R)$.

Denote by $f_{G}:\mathfrak{R}_{r}(G)\to V$ the composition of natural
maps\[
\mathfrak{R}_{r}(G)\subset\mathfrak{R}_{r}(\bG(\R))\subset\mathfrak{R}_{r}(\mathbf{G})\to\mathfrak{X}_{r}(\mathbf{G})\stackrel{\phi}{\to}V.\]
By $G$-invariance of $f_{G}$, this defines a map $p_{G}:\mathfrak{X}_{r}(G)\to V$
whose image lies in $V(\mathbb{R})$. Now, Proposition 6.8
in \cite{rich-slod:1990} states that the image $p_{G}(\mathfrak{X}_{r}(G))$,
is a closed subset in $\mathfrak{X}_{r}(\mathbf{G})(\mathbb{R})\subset V(\mathbb{R})$.
Thus, we have shown the following proposition.
\begin{prop}
\label{realpoints}Let $G$ be a fixed real form of a complex reductive
algebraic group $\mathbf{G}$. The set of real points $\mathfrak{X}_{r}(\mathbf{G})(\R)$
contains $p_{G}(\mathfrak{X}_{r}(G))$ as a closed subset.
Therefore, $\bigcup_{G}\, p_{G}(\mathfrak{X}_{r}(G))\subset\mathfrak{X}_{r}(\mathbf{G})(\R)$,
where the union is over all $G$ which are real forms of $\mathbf{G}$.
\end{prop}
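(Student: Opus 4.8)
The plan is to verify three things about the composite map $p_G$: that it is well-defined on $\X_r(G)$, that its image lands in the real locus $\X_r(\bG)(\R)$, and that this image is closed there; only the last point is nontrivial, and the union statement is then a formality.

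First I would check that $f_G$ factors through $G$-orbits. Since $G \subset \bG$, any two representations in $\mathfrak{R}_r(G)$ lying in a common $G$-orbit lie a fortiori in a common $\bG$-orbit, hence are identified by the $\bG$-invariant quotient map $\mathfrak{R}_r(\bG) \to \X_r(\bG)$. Therefore $f_G$ is constant on $G$-orbits and descends to a continuous map $p_G : \X_r(G) \to V$, continuity being inherited from the polynomial evaluation defining $\phi$.

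Next I would argue that the image of $p_G$ lies in $V(\R)$, in fact in $\X_r(\bG)(\R)$. The essential input is the base-change identification $\C[\mathfrak{R}_r(\bG)]^{\bG} \cong \R[\mathfrak{R}_r(\bG(\R))]^{\bG(\R)} \otimes_\R \C$ together with the Zariski-density of $G$ in $\bG$, which together allow the coordinate functions defining $\phi$ to be chosen from the \emph{real} invariant algebra $\R[\mathfrak{R}_r(\bG(\R))]^{G}$. Such real polynomials take real values on real representations, and every $\rho \in \mathfrak{R}_r(G) \subset \mathfrak{R}_r(\bG(\R))$ is a real point; hence each coordinate of $f_G(\rho)$ is real. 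Consequently the image of $p_G$ is contained in $\phi(\X_r(\bG)) \cap V(\R) = \X_r(\bG)(\R)$. I expect the genuine obstacle to lie in the remaining closedness assertion: to conclude that $p_G(\X_r(G))$ is closed in $\X_r(\bG)(\R)$ I would invoke Proposition 6.8 of \cite{rich-slod:1990}, whose content is precisely that the image of the real polystable orbit space inside the real points of the complex quotient is a closed (semialgebraic) subset. This is where the real Kempf--Ness machinery does the substantive work, since closedness is not formal and would fail without the properness built into that theory.

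Finally, the union statement is immediate: by the second step every $p_G(\X_r(G))$ is contained in the single set $\X_r(\bG)(\R)$, so their union $\bigcup_G p_G(\X_r(G))$, taken over all real forms $G$ of $\bG$, is contained in $\X_r(\bG)(\R)$ as well. No compatibility among the distinct maps $p_G$ is needed for this containment.
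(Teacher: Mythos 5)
Your proposal is correct and follows essentially the same route as the paper: the same base-change identification $\C[\mathfrak{R}_r(\bG)]^{\bG}\cong\R[\mathfrak{R}_r(\bG(\R))]^{\bG(\R)}\otimes_\R\C$ together with Zariski-density of $G$ to choose real generating invariants, the same factorization of $f_G$ through $G$-orbits to define $p_G$ with image in $V(\R)$, and the same appeal to Proposition 6.8 of Richardson--Slodowy for the closedness of the image. Your explicit remarks on well-definedness and on which step is genuinely nontrivial are slightly more detailed than the paper's, but the substance is identical.
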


The map $p_{G}$ is neither surjective nor injective
in general, and we will see below explicit examples that illustrate this situation.
Note, however, that for any real form $G$ of $\mathbf{G}$,
the map $p_{G}:\mathfrak{X}_{r}(G)\to\mathfrak{X}_{r}(\mathbf{G})(\mathbb{R})$
is always finite, as shown in \cite{rich-slod:1990}, Lemma 8.2.

\begin{rem}
The image of $p_{G}$ in Proposition \ref{realpoints} depends on the
given embedding $\phi$. For example, both the groups $\SO(2,\C)=\{x^{2}+y^{2}=1\}$
and $\GL(1,\C)=\{xy=1\}$ are isomorphic to $\C^{*}$ but the real
points for the first is $S^{1}$ and the real points for the second
is $\R^{*}$ (disconnected). So $\X_{r}(\bG)$ can have different
sets of real points, depending on the algebraic structure defined
by $\phi$.

Another example is $G=\SU(3)$ and $\mathbf{G}=\SL(3,\C)$. With respect to the
trace coordinates, $\SL(3,\C)\quot\SL(3,\C)$ can be identified with $\C^{2}$.
Its real points are $\R^{2}$ but the traces of $\SU(3)$ are not
real in general. So the image is not contained in the real locus with
respect to the trace coordinates. This means that, in each case, we should fix
a generating set having certain properties which avoids these issues.
\end{rem}

\label{sec:examples}

We now describe some particularly simple examples, where one can check
directly and explicitly the strong deformation retraction from $\mathfrak{X}_{r}(G)=G^{r}\quot G$
onto $K^{r}/K$, using the trace coordinates for the complex character
variety $\mathfrak{X}_{r}(\mathbf{G})$, and also by describing their
Kempf-Ness sets.

\subsection{$K=\SO(2)$}

Since the special orthogonal group $\SO(2)$ is Abelian, the conjugation
action is trivial. As $\SO(2)$ is isomorphic to the circle group,
$S^{1}$, it follows that
\begin{equation}
\mathfrak{X}_{r}(\SO(2))\cong (S^{1})^r.\label{eq:SO(2)}
\end{equation}

The maximal compact subgroup of $\SO(2,\C)\cong \C^*$ is $\SO(2)$, and it is clear that \[
\mathfrak{X}_{r}(\SO(2,\C))\cong (\C^*)^r\] deformation retracts to $(S^{1})^r$.

Of course, in general, for any Abelian group $G$, we have $\mathfrak{X}_{r}(G)=G^r$, and the deformation
retraction to $K^r$ is given, componentwise, by the polar decomposition.

\subsection{$G=\SL(2,\R)$, $r=1$}

The group $\SO(2)$ is also a maximal compact subgroup of $\SL(2,\R)$.
Hence, from \eqref{eq:SO(2)} and Theorem \ref{thm:maintheorem},
one concludes that $\mathfrak{X}_{1}(\SL(2,\R))$ also retracts onto
$S^{1}$. Let us also see this directly. $\mathfrak{X}_{1}(\SL(2,\R))$
is the space of closed orbits under the conjugation action. These
closed orbits correspond to diagonalizable matrices over $\C$. When
a matrix in $\SL(2,\R)$ is diagonalizable over $\R$, it corresponds
to a point in $\mathfrak{X}_{1}(\SL(2,\R))$ determined
by a matrix of the form $\diag(\lambda,\lambda^{-1})$ for some $\lambda\in\R^{*}$.
Since the diagonal matrices $\diag(\lambda,\lambda^{-1})$ and $\diag(\lambda^{-1},\lambda)$
are conjugated in $\SL(2,\R)$, we can suppose $\lambda\geq\lambda^{-1}$
(with equality exactly when $|\lambda|=1$) and thus the elements
of $\mathfrak{X}_{1}(\SL(2,\R))$ corresponding to these kind of matrices
are parametrized by the space \begin{equation}
\mathcal{D}_{\R}=\left\{ \diag(\lambda,\lambda^{-1})\,|\,\lambda\in\R\setminus(-1,1)\right\} .\label{eq:diagonalizable over R}\end{equation}
Similarly the space of matrices in $\SL(2,\R)$ diagonalizable over
$\C\setminus\R$ is parametrized by \[
\mathcal{D}_{\C}=\left\{ \diag(z,z^{-1}))\,|\, z\in\C^{*},\ z+z^{-1}\in\R\right\} .\]
Notice that we impose the condition of real trace, since the trace
is conjugation invariant. Now, for $z\in \C\setminus \R$, the condition $z+z^{-1}\in\R$ is equivalent
to $|z|=1$, so these matrices are in fact in $\SO(2,\C)$, hence
the corresponding ones in $\SL(2,\R)$ belong to $\SO(2)$, and have the
form \[
A_{\theta}=\left(\begin{array}{cc}
\cos\theta & -\sin\theta\\
\sin\theta & \cos\theta\end{array}\right)\]
 with $0\leq\theta<2\pi$. Now, the only possible $\SL(2,\C)$-conjugated matrices
of this type are $A_{\theta}$ and $A_{-\theta}$, and it is easily
seen that they are not conjugate in $\SL(2,\R)$. So, for each $\theta$, we
have a representative of a class in $\mathfrak{X}_{1}(\SL(2,\R))$.
Hence, from this and from \eqref{eq:diagonalizable over R}, we have a homeomorphism
\begin{equation}
\mathfrak{X}_{1}(\SL(2,\R))\cong\R\setminus(-1,1)\cup\{z\in\C\setminus\R\,|\,|z|=1\}.\label{eq:SL(2,R)}\end{equation}
 From \eqref{eq:SO(2)} and \eqref{eq:SL(2,R)} we see here directly
an example of our main theorem (Theorem \ref{thm:maintheorem}).

Using Proposition \ref{pro:r1-KN}, we can also obtain the same
space considering the Kempf-Ness quotient. For $G=\SL(2,\mathbb{R})$,
one can compute directly that the set of normal matrices is a union
of two closed sets, $\mathcal{KN}_{G}=Y_{1}\cup Y_{2}$, with:\[
Y_{1}=\left\{ \left(\begin{array}{cc}
\alpha & \gamma\\
\gamma & \beta\end{array}\right)\in \SL(2,\mathbb{R}):\ \alpha, \beta, \gamma \in \mathbb{R} \right\}, \]
and
\[
Y_{2}=\left\{A_{\theta}:\ \theta\in\mathbb{R}\right\} =\SO(2).\]
These correspond precisely to the $\SL(2,\mathbb{R})$ matrices that
are $\mathbb{R}$-diagonalizable or not, and they are distinguished
by the absolute value of their trace being greater or less than 2,
respectively (it is easy to show directly that the equation $\alpha\beta=1+\gamma^{2}$
for $\alpha,\beta,\gamma$ real implies $|\alpha+\beta|\geq2$). Now,
for an element $\mathcal{KN}_{G}=Y_{1}\cup Y_{2}$, besides the trace,
we have an extra invariant for the action of $\SO(2)$ (obviously
$Y_{2}$ is invariant under $\SO(2)$) which is the Pfaffian, defined
by (see \cite{As})
\[
\Pf(A)=c-b,\quad\mbox{for }A=\left(\begin{array}{cc}
a & b\\
c & d\end{array}\right).\]
We have $\Pf(B)=0$ for any $B\in Y_{1}$ and $\Pf(A_{\theta})=2 \sin\theta$.
So, the picture in Figure \ref{realrank1} is indeed a precise description of the
embedding $\mathcal{KN}_{G}/K\hookrightarrow\mathbb{R}^{2}$ under
the map $A\mapsto \frac12(\tr(A),\,\Pf(A))$,

\begin{figure}[ht]
\begin{center}
\includegraphics[scale=.2]{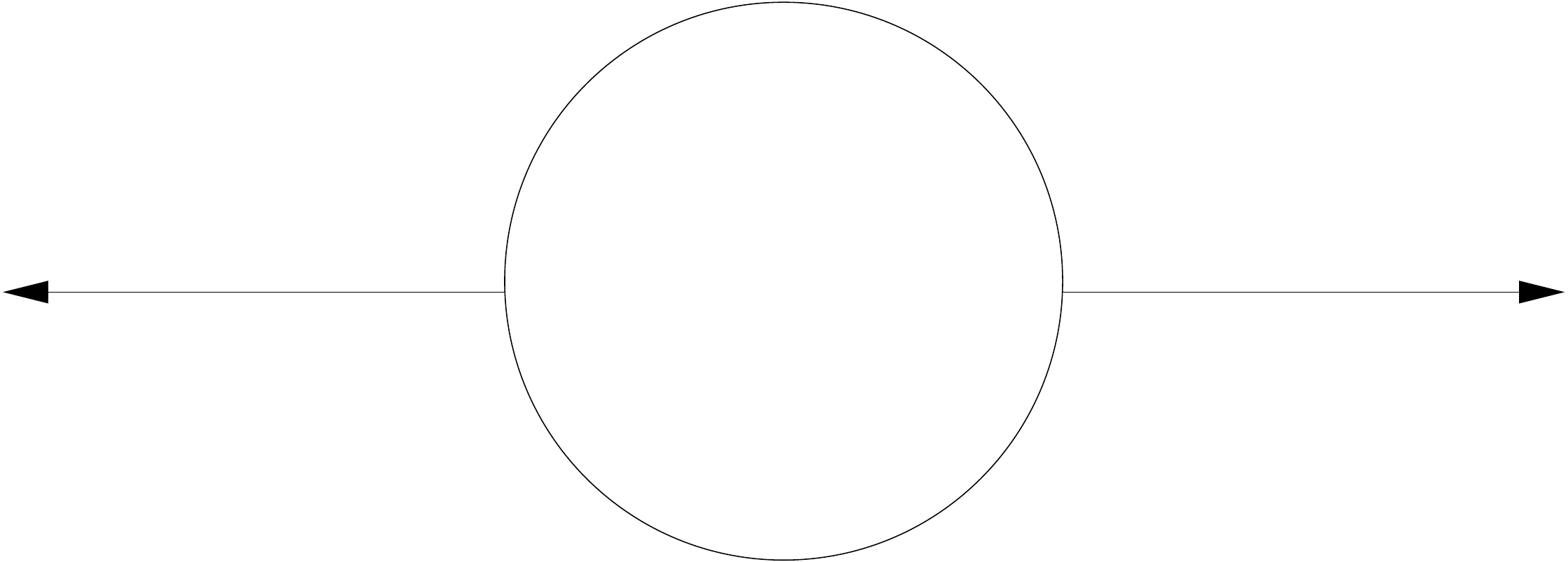}
\caption{$\mathfrak{X}_1(\SL(2,\R))\cong\R\setminus(-1,1)\cup\{z\in\C\setminus\R\,|\,|z|=1\}$}\label{realrank1}
\end{center}
\end{figure}

Finally, we can compare the geometry of $\mathfrak{X}_{1}(\SL(2,\C))=\SL(2,\C)\quot\SL(2,\C)$
and that of $\mathfrak{X}_{1}(\SL(2,\mathbb{R}))$, in trace coordinates.
In these coordinates, $\mathfrak{X}_{1}(\SL(2,\C))$
is $\C$ and thus its real points form $\R$. However, $\SL(2,\R)\quot\SL(2,\R)$
is $(-\infty,-1]\cup[1,\infty)\cup S^{1}$ where $S^{1}$ is a circle
centered at 0 of radius 1. However, after projecting to the real locus, the north and
south hemispheres in $S^{1}$ are identified (since they are conjugated
over $\C$ but not over $\R$). The image is then $\R$ and this is an example where the projection $p_G$ is not injective.

On the other
hand, considering $\SU(2)$, the quotient $\SU(2)/\SU(2)$ is $[-2,2]$
which projects to the same (since the projection is injective for
maximal compact subgroups). So we see that the projection to the real locus is not always surjective.

\subsection{$K=\SO(2)$, $r=2$, $G=\SL(2,\R)$}
Addressing the $\SL(2,\R)$ case with $r=2$ amounts to describing two real unimodular matrices up to $\SL(2,\R)$-conjugation. Generically, such a pair will correspond to an irreducible representation. The non-generic case is when $A_{1},A_{2}\in\SL(2,\R)$ are in the same torus; in particular, in this degenerate case they commute.

Consider first the complex invariants in $\SL(2,\C)^{\times2}$. By
Fricke-Vogt (see \cite{Goldman} for a nice exposition) we have an isomorphism\[
\mathfrak{X}_{2}(\SL(2,\C))=\SL(2,\C)^{\times2}\quot\SL(2,\C)\cong\mathbb{C}^{3},\]
explicitly given by $[(A_{1},A_{2})]\to(\mbox{tr}(A_{1}),\ \mbox{tr}(A_{2}),\ \mbox{tr}(A_{1}A_{2}))$.  Let $t_{1}=\mbox{tr}(A_{1})$, $t_{2}=\mbox{tr}(A_{2})$ and $t_{3}=\mbox{tr}(A_{1}A_{2})$.  Then $$\kappa(t_1,t_2,t_3):=\mathrm{tr}(A_1A_2A_1^{-1}A_2^{-1})=t_1^2+t_2^2+t_3^2-t_1t_2t_3-2.$$  Since commuting pairs $(A_1, A_2)$ have trivial commutator, the reducible locus is contained in $\kappa^{-1}(2)$.  The converse also holds (see \cite{CS}).

Thus, the $\R$-points of $\X_2(\SL(2,\C))$, here denoted by $\X_2(\SL(2,\C))(\R)$, form $\R^3$.  The reducible locus in $\X_2(\SL(2,\C))(\R)$ is therefore $\kappa^{-1}(2)\cap \R^3$ (see Figure \ref{reducibles}).

\begin{figure}[ht]
\begin{center}
\includegraphics[scale=.4]{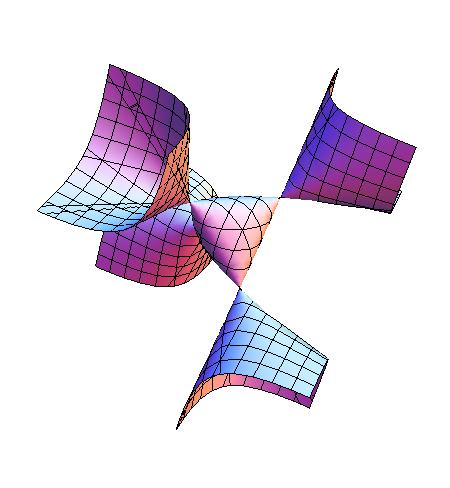}
\caption{Reducible Locus in $\X_2(\SL(2,\C))(\R)$}\label{reducibles}
\end{center}
\end{figure}

Suppose both $A_{1},A_{2}$ have eigenvalues of unit norm and commute.  So, up to conjugation in $\SL(2,\C)$, they are of the form \[
A_{1}=\left(\begin{array}{cc}
\cos\alpha & -\sin\alpha\\
\sin\alpha & \cos\alpha\end{array}\right),\quad A_{2}=\left(\begin{array}{cc}
\cos\beta & -\sin\beta\\
\sin\beta & \cos\beta\end{array}\right),\]
and thus $t_{1}  =  2\cos\alpha$, $t_{2}  =  2\cos\beta$, $t_{3}  =  2\cos(\alpha+\beta)$.  Note that in unitary coordinates these matrices take the form $\mathrm{diag}(e^{i\alpha},e^{-i\alpha})$ and $\mathrm{diag}(e^{i\beta},e^{-i\beta})$.  Putting these values in $\kappa(t_1,t_2,t_3)$ precisely determines the boundary of the solid closed 3-ball $\overline{B}^3\cong\X_2(\SU(2))$ depicted in Figure \ref{pillow} (see \cite[Lemma 6.3 (ii)]{Florentino-Lawton:2009}).

So the reducible locus is homeomorphic to $S^2$ and is given by those representations that, up to conjugation in $\SL(2,\C)$, are in $\SO(2)=\SL(2,\R)\cap\SU(2)$; this fact was first shown in \cite{Bratholdt-Cooper:2002}.

\begin{figure}[ht]
\begin{center}
\includegraphics[scale=.3]{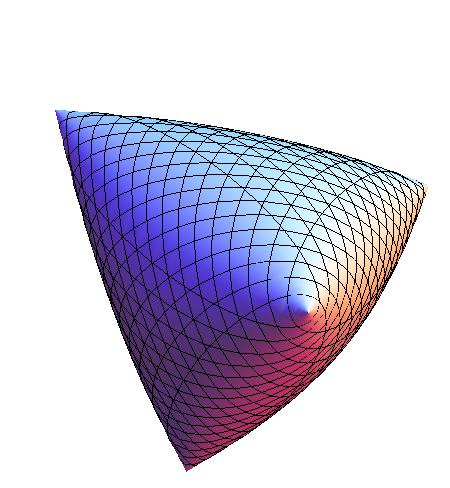}
\caption{$\mathfrak{X}_2(\SU(2))$}\label{pillow}
\end{center}
\end{figure}

Therefore, the four disjoint planes in Figure \ref{reducibles} correspond to pairs $$\left(\left(\begin{array}{cc}
\lambda & 0\\0 & 1/\lambda\end{array}\right), \left(\begin{array}{cc}\mu & 0\\0 & 1/\mu\end{array}\right)\right)$$ where $\lambda, \mu\in \R^*$.

As shown in \cite{ms} (p.458, Prop.III.1.1), every point in $\X_2(\SL(2,\C))(\R)\cong \R^3$ corresponds to either a $\SU(2)$-representation or a $\SL(2,\R)$-representation (this is a case where the union in Proposition \ref{realpoints} gives equality).  A point corresponds to a unitary representation if and only if $-2\leq t_1,t_2,t_3\leq 2$ and $\kappa(t_1,t_2,t_3)\leq 2$; as in Figure \ref{pillow}.  Otherwise, the representation is in $\SL(2,\R)$.

Figure \ref{region} gives a picture of $\X_2(\SL(2,\C))(\R)$, restricted to $\kappa(x,y,z)\leq 5$; from this, the deformation retraction to the boundary of the solid ball $\X_2(\SU(2))$ can be seen.

\begin{figure}[ht]
\begin{center}
\includegraphics[scale=.4]{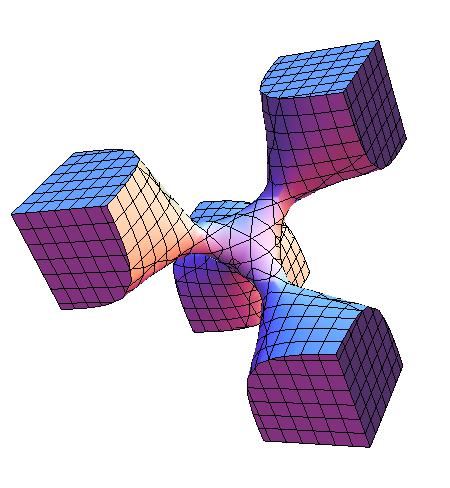}
\caption{Region $\kappa\leq 5$ in $\mathfrak{X}_2(\SL(2,\C))(\R)$}\label{region}
\end{center}
\end{figure}

Having described the $\R$-points of $\X_2(\SL(2,\C))$, we now can describe $\X_2(\SL(2,\R))$.  Since we are considering the closed $\SL(2,\R)$-orbits, we have three sets to consider.  First, there are the representations that are irreducible over $\SL(2,\C)$; these are called {\it absolutely irreducible}, or $\C$-irreducible.  Secondly, we have the representations that are irreducible over $\R$ but are reducible over $\C$; called $\C$-reducible or $\R$-irreducible.  Third, we have the representations that are reducible over $\R$; called $\R$-reducible.

Notice that conjugating by $\mathrm{diag}(i, -i)$ defines a $\Z/2\Z$-action on $\X_2(\SL(2,\R))$.  Since the characteristic polynomial is quadratic, any conjugation action from $\SL(2,\C)$ on $\X_2(\SL(2,\R))$ has to be equivalent to the action of $\mathrm{diag}(i, -i)$.  It is a free action on the irreducible locus (consisting of $\R$ and $\C$-irreducibles), and has its fixed locus exactly the $\R$-reducible representations.  The quotient of this action is exactly $\X_2(\SL(2,\C))(\R)-\X_2(\SU(2))^0=\R^3-B^3$ since $\mathrm{diag}(i, -i)$ preserves $\SL(2,\C)$-orbits.  Hence, the $\R$-reducible locus in $\X_2(\SL(2,\R))$ is exactly the four planes depicted in Figure \ref{reducibles}, and the irreducible locus in $\X_2(\SL(2,\R))$ is a $\Z/2\Z$-cover of the complement of those four planes in $\R^3-B^3$.

On the other hand, $\X_2(\SO(2))$ is a torus $S^1\times S^1$, and the sphere $S^2$ arises as the quotient of $S^1\times S^1$ by the $\Z/2\Z$-action described above (see \cite{Bratholdt-Cooper:2002}).

Lastly, the $\Z/2\Z$-action is equivariant with respect to the deformation retraction from $\X_2(\SL(2,\R))$ to $\X_2(\SO(2))$ since it is conjugation, which explains why we see the deformation retraction of $\R^3-B^3=\X_2(\SL(2,\C))(\R)-\X_2(\SU(2))^0\cong \X_2(\SL(2,\R))/(\Z/2\Z)$ onto $S^2=\partial \X_2(\SU(2))\cong \X_2(\SO(2))/(\Z/2\Z)$.

\begin{remark}
By \cite{Choi}, $\X_2(\SO(3))$ is a $(\Z/2\Z)^2$ quotient of $\X_2(\SU(2))$, and both relate to the moduli space of generalized spherical triangles.  This makes
$\X_2(\SO(3))$ an orbifold quotient of a solid $3-ball$ $($this also follows from \cite{Florentino-Lawton:2012}$)$.
\end{remark}

Now, let us consider the Kempf-Ness set for this case. From Proposition
\ref{pro:general-KN},\[
\mathcal{KN}_{G}^{Y}=\{(A_{1},A_{2})\in G^{2}:\ A_{1}^{*}A_{1}-A_{1}A_{1}^{*}+A_{2}^{*}A_{2}-A_{2}A_{2}^{*}=0\}.\]
Let us write the 2 matrices in convenient variables as\[
A_{i}=\left(\begin{array}{cc}
a_{i} & b_{i}\\
c_{i} & d_{i}\end{array}\right)=\left(\begin{array}{cc}
t_{i}+s_{i} & q_{i}-p_{i}\\
q_{i}+p_{i} & t_{i}-s_{i}\end{array}\right),\quad i=1,2.\]
So, the new variables are: $t_{i}=\frac{a_{i}+d_{i}}{2}$, $s_{i}=\frac{a_{i}-d_{i}}{2}$,
$q_{i}=\frac{c_{i}+b_{i}}{2}$ and $p_{i}=\frac{c_{i}-b_{i}}{2}$,
$i=1,2$. In particular, note that the traces and Pfaffians are $\tr(A_{i})=2t_{i}$
and $\Pf(A_{i})=2p_{i}$, respectively. We can describe inside $\mathbb{R}^{8}$
with coordinates $(a_{1},\cdots,d_{2})$ as the closed algebraic set:\[
\mathcal{KN}_{G}^{Y}=\left\{ (a_{1},\cdots,d_{2})\in\mathbb{R}^{8}:\ \begin{array}{c}
a_{1}d_{1}-b_{1}c_{1}=a_{2}d_{2}-b_{2}c_{2}=1,\\
\sum_{i=1}^{2}c_{i}^{2}-b_{i}^{2}=\sum_{i=1}^{2}(a_{i}-d_{i})(c_{i}-b_{i})=0\end{array}\right\} ,\]
or equivalently,\[
\mathcal{KN}_{G}^{Y}=\left\{ (t_{1},p_{1},s_{1},q_{1},t_{2},p_{2},s_{2},q_{2})\in\mathbb{R}^{8}:\ \begin{array}{c}
t_{1}^{2}+p_{1}^{2}-q_{1}^{2}-s_{1}^{2}=t_{2}^{2}+p_{2}^{2}-q_{2}^{2}-s_{2}^{2}=1,\\
q_{1}p_{1}+q_{2}p_{2}=s_{1}p_{1}+s_{2}p_{2}=0\end{array}\right\} ,\]
Now, $\mathcal{KN}_{G}^{Y}$ is invariant under $K=\SO(2)$, although
it is not immediately apparent. From \cite{As} we know that, for the
$\SO(2)$ simultaneous conjugation action on two $2\times2$ real matrices,
there are 8 invariants: the five traces $\tr A_{1}$, $\tr A_{2}$,
$\tr A_{1}A_{2}$, $\tr A_{1}^{2}$, $\tr A_{2}^{2}$, and the three
Pfaffians $\Pf A_{1}$, $\Pf A_{2}$ and $\Pf A_{1}A_{2}$. Obviously,
the traces of $A_{1}^{2}$ and $A_{2}^{2}$ are not important here
since for unimodular matrices $A$, $\tr A^{2}=(\tr A)^{2}-2$. So the 6
remaining invariants give rise to an embedding
\begin{eqnarray*}
\psi:\mathcal{KN}_{G}^{Y} /K& \to & \mathbb{R}^{6}\\{}
[(A_{1},A_{2})] & \mapsto & (t_{1},t_{2},t_{3},p_{1},p_{2},p_{3})
\end{eqnarray*}
with $t_{i}:=\frac{1}{2}\tr A_{i}$, $p_{i}:=\frac{1}{2}\Pf A_{i}$,
for $i=1,2$, and
\begin{eqnarray*}
t_{3} &\hspace{-.2cm} := & \hspace{-.2cm}\frac{1}{2}\tr(A_{1}A_{2})=t_{1}t_{2}-p_{1}p_{2}+s_{1}s_{2}+q_{1}q_{2}\\
p_{3} &\hspace{-.2cm} := &\hspace{-.2cm} \frac{1}{2}\Pf(A_{1}A_{2})=p_{1}t_{2}+t_{1}p_{2}+q_{1}s_{2}-s_{1}q_{2}.
\end{eqnarray*}
For $i=1,2$, let $\Delta_i=1-t_i^2-p_i^2$. Then the closure of the image is defined by the four equations:
\begin{eqnarray*}
p_{1}(t_{2}p_{1}+t_{1}p_{2}-p_{3}) & = & 0\\
p_{2}(t_{2}p_{1}+t_{1}p_{2}-p_{3}) & = & 0\\
p_{1}^{2}\Delta_{1}-p_{2}^{2}\Delta_2 & = & 0\\
p_{2}^{2}(\Delta_{1}(p_{1}^{2}-t_{1}^{2})-\Delta_{2}(p_{2}^{2}-t_{2}^{2})) & = & p_{3}\Delta_{1}(t_{1}p_{2}-t_{2}p_{1}),
\end{eqnarray*}
as can be obtained using a computer algebra system. So $\mathcal{KN}_{G}^{Y}/K$ is a semialgebraic set whose closure,
in these natural coordinates, is an algebraic set of degree 6 in $\mathbb{R}^{6}$,
and it can be checked that it has indeed dimension 3, as expected.

\subsection{$K=\SO(2)$, $r\geq 3$, $G=\SL(2,\R)$}
In this subsection we say a few words about the $r=3$ case.  The complex moduli space $\X_3(\SL(2,\C))$ is a branched double cover of $\C^6$ (it is a hyper-surface in $\C^7$).  Given a triple $(A_1,A_2,A_3)\in \hom(F_3,\SL(2,\C))$, the seven parameters determining its orbit closure are $t_i=\tr(A_i),t_k=\tr(A_iA_j),t_7=\tr(A_1A_2A_3)$, $1\leq i\not=j\leq 3$ and $4\leq k\leq 6$.  See \cite{Goldman} for details. From \cite{Florentino-Lawton:2009} and \cite{Bratholdt-Cooper:2002} it follows that $\X_3(\SL(2,\C))$ is homotopic to a 6-sphere $\X_3(\SU(2))\cong S^6$.

\begin{remark}
Fixing the values of the four parameters $\tr(A_i),\tr(A_1A_2A_3)$ defines relative character varieties since these are the four boundary coordinates for a 4-holed sphere; likewise, in the $r=2$ case fixing the boundary of a 1-holed torus is equivalent to fixing the value of $\kappa$.  The topology of the $\R$-points of relative character varieties for $r=2$ and $r=3$ have been explored in \cite{BG}; and some of their pictures relate to ours given that $\kappa$ arises naturally in both contexts.
\end{remark}

The defining equation for $\X_3(\SL(2,\C))$ is given by $$R=t_1^2-t_2 t_4 t_1-t_3 t_5 t_1+t_2 t_3 t_7 t_1-t_6 t_7t_1+t_2^2+t_3^2+t_4^2+t_5^2+t_6^2+t_7^2-t_2 t_3 t_6+t_4 t_5 t_6-t_3 t_4 t_7-t_2 t_5t_7-4.$$  In \cite{Florentino-Lawton:2012}, it is shown that the reducible locus is exactly the singular locus, and thus the Jacobian ideal $\mathfrak{J}$, generated by the seven partial derivatives of $R$, defines the reducible locus explicitly as a sub-variety.  Thus:  $\mathfrak{J}=\langle \partial R/\partial t_i\ |\ 1\leq i\leq 7\rangle=\langle 2 t_1-t_2 t_4-t_3 t_5+t_2 t_3 t_7-t_6 t_7,2 t_2-t_1 t_4-t_3 t_6+t_1 t_3 t_7-t_5
   t_7,2 t_3-t_1 t_5-t_2 t_6+t_1 t_2 t_7-t_4 t_7,-t_1 t_2+2 t_4+t_5 t_6-t_3 t_7,-t_1
   t_3+2 t_5+t_4 t_6-t_2 t_7,-t_2 t_3+t_4 t_5+2 t_6-t_1 t_7,t_1 t_2 t_3-t_4 t_3-t_2
   t_5-t_1 t_6+2 t_7\rangle.$

Using a Groebner basis algorithm, $\mathfrak{J}$ is equivalent to the ideal $\langle t_1^4-t_2 t_5 t_6 t_1^3-2 t_2^2 t_1^2+t_2^2 t_5^2 t_1^2-2 t_5^2 t_1^2+t_2^2 t_6^2 t_1^2+t_5^2 t_6^2 t_1^2-2 t_6^2 t_1^2-t_2 t_5 t_6^3 t_1-t_2 t_5^3 t_6 t_1-t_2^3 t_5 t_6 t_1+8 t_2 t_5 t_6 t_1+t_2^4+t_5^4+t_6^4-2 t_2^2 t_5^2-2 t_2^2 t_6^2+t_2^2 t_5^2 t_6^2-2 t_5^2 t_6^2\rangle$.  Thus, the reducible locus is isomorphic to a hypersurface in $\C^4$.  If all coordinates are restricted to $[-2,2]$, we are in $\X_3(\SU(2))$ since the $r=1$ case implies that $t_1, t_2, t_3$ are in $[-2,2]$ if and only if $A_1,A_2,A_3$ are $\SL(2,\C)$-conjugate to elements in $\SU(2)$, and that forces all the other coordinates to take values in $[-2,2]$ as well.

\begin{figure}[ht]
\begin{center}
\includegraphics[scale=.4]{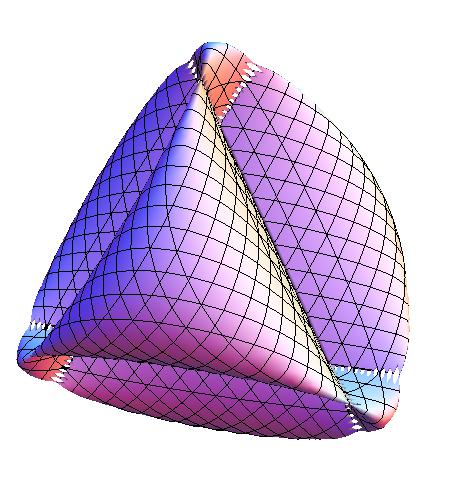}
\caption{A level set in the hypersurface $\mathfrak{X}_{\Z^3}(\SU(2))$}\label{levelset}
\end{center}
\end{figure}

As described in \cite{Florentino-Lawton:2013b}, the reducible locus in $\X_3(\SU(2))$ is homeomorphic to \linebreak$\X_{\Z^3}(\SU(2)):=\hom(\Z^3,\SU(2))/\SU(2)\cong (S^1)^3/(\Z/2\Z)$, and thus it is a 3-dimensional orbifold with 8 isolated singularities.  Neighborhoods around singularities will look like real cones over $\R P^2$; and thus are not locally Euclidean at those points (see p. 475 in \cite{ACG}).

Similar to the $r=2$ case, the orbifold $\X_{\Z^3}(\SU(2))$ is the quotient of $\X_3(\SO(2))\cong (S^1)^3$ by the $\Z/2\Z$-action defined from conjugation by $\diag(i,-i)$.  And as before, this action is equivariant with respect to the deformation retraction of $\X_3(\SL(2,\R))$ onto $\X_3(\SO(2))$, and it is a double cover over the absolutely irreducible representations and fixes the $\R$-reducible representations.

In fact, this situation is completely general.  $\X_r(\SL(2,\R))$ decomposes into three sets:  (1) the absolutely irreducible locus which double covers the irreducible locus in the $\X_r(\SL(2,\C))(\R)$; (2) the $\R$-irreducible locus isomorphic to $(S^1)^r=\X_r(\SO(2))$ branch double covers the orbifold $\X_{\Z^r}(\SU(2))\cong (S^1)^r/(\Z/2\Z)$ having $2^r$ discrete fixed points from the central representation, making orbifold singularities with neighborhoods isomorphic to real cones over $\R P^{r-1}$; and (3) the $\R$-reducible locus which is isomorphic to that same locus in $\X_r(\SL(2,\C))(\R)$ intersecting $\X_{\Z^r}(\SU(2))$ at the central representations and homeomorphic to $((-\infty, -1]\cup [1,\infty))^r$.  And the deformation retraction we establish in this paper from $\X_r(\SL(2,\R))$ to $\X_r(\SO(2))$ is equivariant with respect to the $\Z/2\Z$-action of $\mathrm{diag}(i,-i)$ and therefore determines a deformation retraction on the level of $\R$-points from $\X_r(\SL(2,\C))(\R)-\X_r(\SU(2))^0$ onto $\X_{\Z^r}(\SU(2))$. For an explicit characterization of the $\mathbb{C}$-reducible
locus, in terms of traces of minimal words, see \cite{Flo}.

\section*{Acknowledgments}
We thank Tom Baird for many conversations about equivariant cohomology and Theorem \ref{trivialaction}.  We also thank the referee for a careful reading, and for many suggestions leading to relevant improvements in exposition.

\end{document}